\newtheorem*{maintheorem*}{Main Theorem}
\newtheorem{theorem}{Theorem}[section]
\newtheorem{question}[theorem]{Question}
\newtheorem{prop}[theorem]{Proposition}
\newtheorem{cor}[theorem]{Corollary}
\theoremstyle{definition}
\newtheorem{definition}[theorem]{Definition}
\newtheorem{remark}[theorem]{Remark}
\newtheorem{example}[theorem]{Example}
\numberwithin{equation}{section}
\newcommand{\nn}{\mathbb{N}}
\newcommand{\pp}{\mathbb{P}}
\newcommand{\qq}{\mathbb{Q}}
\newcommand{\rr}{\mathbb{R}}
\newcommand{\zz}{\mathbb{Z}}
\providecommand\ldb{\llbracket}
\providecommand\rdb{\rrbracket}
\newcommand{\gp}{\textsf{gp}}
\newcommand{\rank}{\text{rank}}
\newcommand{\supp}{\text{supp}}
\keywords{positive monoids, Puiseux monoids, atomicity, factorization theory, ACCP, BF-monoids, FF-monoids}
\subjclass[2010]{Primary: 20M13; Secondary: 06F05, 20M14}
\begin{document}
	
	\mbox{}
	\title{Atomicity of positive monoids}
	
	\author{Scott T. Chapman}
	\address{Department of Mathematics\\Sam Houston State University\\Huntsville, TX 77341}
	\email{scott.chapman@shsu.edu}
	
	\author{Marly Gotti}
	\address{Research and Development \\ Biogen \\ Cambridge \\ MA 02142}
	\email{marly.cormar@biogen.com}
	
	\date{\today}
	
	\begin{abstract}
		An additive submonoid of the nonnegative cone of the real line is called a positive monoid. Positive monoids consisting of rational numbers (also known as Puiseux monoids) have been the subject of several recent papers.  Moreover, those generated by a geometric sequence have also received a great deal of recent attention. Our purpose is to survey many of the recent advances regarding positive monoids, and we provide numerous examples to illustrate the complexity of their atomic and arithmetic structures.
	\end{abstract}
	\bigskip

\maketitle

\bigskip

%-------------------------------------------------------------------------------
%-------------------------------------------------------------------------------
%-------------------------------------------------------------------------------

\section{Introduction}
\label{sec:intro}

A cancellative and commutative (additive) monoid is called \textit{atomic} if every non-invertible element is the sum of \textit{atoms} (i.e., \textit{irreducibles}). Much recent literature has focused on the arithmetic of such monoids; the monograph \cite{GH06} contains an extensive bibliography of such work. Many of these recent works have centered on important classes of monoids such as Krull monoids, the multiplicative monoids of integral domains, numerical monoids, and congruence monoids.  In their landmark study of the multiplicative monoid of an integral domain \cite{AAZ90}, Anderson, Anderson, and Zafrullah introduced the properties of bounded and finite factorizations. These ideas can easily be extended to all commutative cancellative monoids, and we include below a diagram~\eqref{diag:AAZ's chain for monoids} containing their factorization properties modified for the general case.
\begin{equation} \label{diag:AAZ's chain for monoids}
	\begin{tikzcd}%[cramped]
		\textbf{ UFM } \ \arrow[r, Rightarrow]  \arrow[d, Rightarrow] & \ \textbf{ HFM } \arrow[d, Rightarrow] \\
		\textbf{ FFM } \ \arrow[r, Rightarrow] & \ \textbf{ BFM } \arrow[r, Rightarrow]  & \textbf{ ACCP monoid}  \arrow[r, Rightarrow] & \textbf{ atomic monoid}
	\end{tikzcd}
\end{equation}
While it is well known in the general case that each implication in ~\eqref{diag:AAZ's chain for monoids} holds, it is also known that none of the implications are reversible (even in the class of integral domains, see~\cite{AAZ90}).
\smallskip

The fundamental purpose of this work is to survey the recent results regarding the atomicity of additive submonoids of the nonnegative cone of the real line.  Such a survey
 is important, as in many cases these monoids offer simpler examples of complex factorization properties, than those currently in the commutative algebra literature.   We begin with the following definition.

\begin{definition}
	An additive submonoid of $(\rr_{\ge 0}, +)$ is called a \emph{positive monoid}.
\end{definition}

%%%%% Expository Notes %%%%%%%%%%%%%%%%%

Submonoids of $(\nn_0,+)$ are clearly positive monoids, and they are called \emph{numerical monoids}. An introduction to numerical monoids is offered in~\cite{GR09}. In addition, submonoids of $(\qq_{\ge 0},+)$ are called \emph{Puiseux monoids}. Every numerical monoid is clearly a Puiseux monoid, and one can readily verify that a Puiseux monoid is finitely generated if and only if it is isomorphic to a numerical monoid. Puiseux monoids are perhaps the positive  monoids that have been most systematically  investigated in the last five years (see \cite{GGT21} and references therein). A survey on the atomicity of Puiseux monoids can be found in the recent Monthly article~\cite{CGG21}.

Positive monoids that are increasingly generated have been studied in~\cite{mBA20,BG21,fG19,GG18}. On the other hand, positive semirings (i.e., positive monoids closed under multiplication) have been studied in \cite{BCG21,BG20}, while the special case of positive monoids generated by a geometric sequence (necessarily positive semirings) have been studied in~\cite{CGG20,CG20}. Finally, Furstenberg positive monoids (i.e., those whose nonzero elements are divisible by an atom) have been recently investigated in~\cite{fG21}.

%%%%%%% Summary %%%%%%%%%%%%%%

We break our remaining work into 5 sections.  In Section 2, we lay out the basic necessary definitions and notation.  Section 3 explores the factorization properties of cyclic semirings (i.e., additive monoids generated over the nonnegative integers by the sequence $\{\alpha^n\}_{n\in \mathbb{N}_0}$ where $\alpha$ is a positive real number).  In Theorem 3.3, we characterize when these monoids are atomic, and in those cases determine completely in Proposition 3.6 which elements are atoms.  In Section 4, we explore constructing atomic positive monoids that do not satisfy the ACCP.  These examples are vital, as such examples in the realm of integral domains are extremely difficult to construct.  As Proposition 4.4 shows us how to do this with arbitrary rank, Proposition 4.6 constructs a positive monoid with the ACCP which is not a BFM.  Section 5 explores bounded factorizations, and Proposition 5.5 constructs positive monoids which are BFMs.  As a by product of these results, we offer several examples of BFMs which are not FFMs.  We conclude in Section 6 by exploring in detail the finite factorization property; we prove in Theorem 6.1 that every positive monoid generated by an increasing sequence is an FFM.  In some sense, our entire paper is motivated by diagram \eqref{diag:AAZ's chain for monoids}.  We offer counterexamples using positive monoids to all the reverse implications of \eqref{diag:AAZ's chain for monoids} (see Remarks 3.1, 4.5, 4.7, 5.4, 6.5, and 6.7).

\bigskip 

%-------------------------------------------------------------------------------
%-------------------------------------------------------------------------------
%-------------------------------------------------------------------------------

\section{Preliminaries}
\label{sec:prelim}

We let $\pp$, $\nn$, and $\nn_0$ denote the set of primes, positive integers, and nonnegative integers, respectively. If $X$ is a subset of $\rr$ and $r$ is a real number, we let $X_{\ge r}$ denote the set $\{s \in X : s \ge r\}$. In a similar fashion, we use the notations $X_{> r}, X_{\le r}$, and $X_{< r}$. For a positive rational $q$, the positive integers $a$ and $b$ with $q = a/b$ and $\gcd(a,b) = 1$ are denoted by $\mathsf{n}(q)$ and $\mathsf{d}(q)$, respectively. 
\smallskip

The following definition of a monoid, albeit not the most standard\footnote{A monoid is most commonly defined as a semigroup with an identity element.}, will be the most appropriate in the context of this paper.

\begin{definition}
	A \emph{monoid} is a semigroup with identity that is cancellative and commutative.
\end{definition}

Monoids will be written additively, unless we say otherwise. In addition, we shall tacitly assume that every monoid here is reduced, that is, its only invertible element is zero. Let $M$ be a monoid. We set $M^\bullet = M \setminus \{0\}$. For a subset $S$ of $M$, we let $\langle S \rangle$ denote the submonoid of $M$ generated by $S$, i.e., the intersection of all submonoids of $M$ containing $S$. We say that a monoid is \emph{finitely generated} if it can be generated by a finite set.
\smallskip

A nonzero element $a \in M$ is called an \emph{atom} if whenever $a = b_1 + b_2$ for some $b_1, b_2 \in M$ either $b_1 = 0$ or $b_2 = 0$. As it is customary, we let $\mathcal{A}(M)$ denote the set consisting of all atoms of $M$. If $\mathcal{A}(M)$ is empty, $M$ is said to be \emph{antimatter}.

\begin{definition}
	A monoid is \emph{atomic} if every nonzero element of the monoid is the sum of atoms.
\end{definition}

If $I$ is a subset of $M$, then $I$ is called an \emph{ideal} provided that $I + M = I$. Ideals of the form $b + M$, where $b \in M$, are called \emph{principal}. The monoid $M$ satisfies the \emph{ascending chain condition on principal ideals} (\emph{ACCP} for short) if every ascending chain of principal ideals of $M$ becomes stationary from some point onward. It is not hard to prove that every monoid satisfying the ACCP is atomic (see \cite[Proposition~1.1.4]{GH06}).
\smallskip

A  monoid $F$ is called a \emph{free commutative monoid} with basis~$A$ if every element $b \in F$ can be written uniquely as the sum of elements in $A$. It is well known that for every set $A$ there exists, up to isomorphism, a unique free commutative monoid on $A$, which we denote by $F(A)$. It is also well known that every map $A \to M$, where $M$ is a monoid, uniquely extends to a monoid homomorphism $F(A) \to M$.
\smallskip

The \emph{Grothendieck group} of a monoid $M$, here denoted by $\gp(M)$, is the abelian group (unique up to isomorphism) satisfying the property that any abelian group containing a homomorphic image of $M$ will also contain a homomorphic image of $\gp(M)$. The \emph{rank} of the monoid $M$ is then defined to be the rank of $\gp(M)$ as a $\zz$-module or, equivalently, the dimension of the $\qq$-vector space $\qq \otimes_\zz \gp(M)$. 
\smallskip

For an atomic monoid $M$, we let $\mathsf{Z}(M)$ denote the free commutative monoid on the set $\mathcal{A}(M)$. The elements of $\mathsf{Z}(M)$ are called \emph{factorizations}. Then we can think of factorizations in $\mathsf{Z}(M)$ as formal sums of atoms. The unique monoid homomorphism $\pi \colon \mathsf{Z}(M) \to M$ such that $\pi(a) = a$ for all $a \in \mathcal{A}(M)$ is called the \emph{factorization homomorphism}. For every element $b \in M$,
\[
	\mathsf{Z}(b) := \pi^{-1}(b) \subseteq \mathsf{Z}(M)
\]
is called the \emph{set of factorizations} of $b$. If for every $b \in M$ the set $\mathsf{Z}(b)$ is finite, then $M$ is called a \emph{finite factorization monoid} (\emph{FFM}). Also, if $\mathsf{Z}(b)$ is a singleton for every $b \in M$, then $M$ is called a \emph{unique factorization monoid} (\emph{UFM}). Note that every UFM is an FFM. It follows from \cite[Proposition~2.7.8]{GH06} that every finitely generated monoid is an FFM.
\smallskip

Let $z \in \mathsf{Z}(M)$ be a factorization in $M$. If we let $|z|$ denote the number of atoms (counting repetitions) in the formal sum defining $z$ in $\mathsf{Z}(M)$, then $|z|$ is called the \emph{length} of $z$. For each element $b \in M$,
\[
	\mathsf{L}(b) := \{|z| \ :  z \in \mathsf{Z}(b)\}
\]
is called the \emph{set of lengths} of $b$. If the set $\mathsf{L}(b)$ is finite for each $b \in M$, then $M$ is called a \emph{bounded factorization monoid} (\emph{BFM}).  It is clear that FFMs are BFMs. The finite and bounded factorization properties were introduced by Anderson, Anderson, and Zafrullah in \cite{AAZ90} in the context of integral domains. Bounded and finite factorization monoids were first studied by Halter-Koch in~\cite{fHK92}. A recent survey on the finite and bounded factorization properties can be found in~\cite{AG20}. The monoid $M$ is called a \emph{half-factorial monoid} (\emph{HFM}) provided that for every $b\in M$ the set $\mathsf{L}(b)$ is a singleton. It follows directly from the definition that every HFM is a BFM. The study of half-factoriality, mainly in the context of algebraic number theory, dates back to the 1960s (see \cite{lC60}). The term ``half-factorial" was coined by Zaks in \cite{aZ76}. A survey on half-factoriality can be found in~\cite{CC00}.

\bigskip

%-------------------------------------------------------------------------------
%-------------------------------------------------------------------------------
%-------------------------------------------------------------------------------

\section{A Class of Atomic Positive Monoids}
\label{sec:atomicity}

A positive monoid consisting of rational numbers is called a \emph{Puiseux monoid}. The class of Puiseux monoids will be a convenient source of examples throughout our exposition. None of the implications of Diagram~\ref{diag:AAZ's chain for monoids} are reversible in the class of positive monoids.  Moreover, as is illustrated in~\cite{CGG21}, none of the implications (except \textbf{UFM} $\Rightarrow$ \textbf{HFM}) is reversible in the subclass of Puiseux monoids. An example of a half-factorial positive monoid that is not a UFM is given in Example~\ref{ex:HFM not UFM}.

\begin{remark}
	In this section, we primarily focus on atomic monoids. However, there are many positive monoids that are not atomic. Indeed, the Puiseux monoid $\langle 1/p^n : n \in \nn \rangle$ is antimatter for every $p \in \pp$. 
\end{remark}

Perhaps the class of non-finitely generated positive monoids that has been most thoroughly studied is that one consisting of cyclic semirings~\cite{CGG20}.

\begin{definition}
	For $\alpha \in \rr_{> 0}$, we let $\nn_0[\alpha]$ denote the positive monoid $\langle \alpha^n : n \in \nn_0 \rangle$.
\end{definition}

Observe that $\nn_0[\alpha]$ is closed under multiplication and, therefore, $(\nn_0[\alpha]^\bullet, \cdot)$ is also a monoid (not necessarily reduced). Positive monoids closed under multiplication are called \emph{positive semirings} and have been recently studied in \cite{BCG21} by Baeth, Gotti, and the first author. We will only be concerned here with the additive structure of the semiring $\nn_0[\alpha]$. For $q \in \qq_{> 0}$, the atomicity of $\nn_0[q]$ was first considered in \cite[Section~6]{GG18}; later in~\cite{CGG20} several factorization invariants of $\nn_0[q]$ were compared and contrasted to those of numerical monoids generated by arithmetic sequences.

\smallskip
In the next theorem, we characterize when $\nn_0[\alpha]$ is atomic. In addition, we give two sufficient conditions for atomicity. First, we recall Descartes' Rule of Signs. Given a polynomial $f(x) = c_n x^n + \cdots + c_1x + c_0 \in \rr[x]$, the \emph{number of variations of the sign} of $f(x)$ is the cardinality of the set $\{j \in \ldb 1, n \rdb : c_j c_{j-1} < 0\}$. Descartes' Rule of Signs states that the number of variations of the sign of a polynomial $f(x) \in \rr[x]$ is at least, and has the same parity as, the number of positive roots of $f(x)$ provided that we count each root with multiplicity.

\begin{theorem}\cite[Theorem~4.1]{CG20} \label{thm:atomic characterization}
	For every $\alpha \in \rr_{>0}$, the following conditions are equivalent.
	\begin{enumerate}
		\item[(a)] The monoid $\nn_0[\alpha]$ is atomic.
		\smallskip
		
		\item[(b)] The monoid $\nn_0[\alpha]$ is not antimatter.
		\smallskip
		
		\item[(c)] The element $1$ is an atom of  $\nn_0[\alpha]$.
	\end{enumerate}
	In addition, if $\alpha$ is an algebraic number and $m(x)$ is the minimal polynomial of $\alpha$, then the following statements hold.
		\begin{enumerate}
			\item If $\alpha$ is not rational and $|m(0)| \neq 1$, then $\nn_0[\alpha]$ is atomic.
			\smallskip
			
			\item If $m(x)$ has at least two positive roots, counting repetitions, then $\nn_0[\alpha]$ is atomic.
		\end{enumerate}
\end{theorem}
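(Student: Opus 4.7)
The plan is to first establish the three-way equivalence (a)$\Leftrightarrow$(b)$\Leftrightarrow$(c) and then deduce the two sufficient conditions by showing that each hypothesis forces $1$ to be an atom of $\nn_0[\alpha]$. The crucial preliminary observation is that every atom of $\nn_0[\alpha]$ must equal $\alpha^n$ for some $n \ge 0$: any $x \in \nn_0[\alpha]^\bullet$ admits a representation $x = \sum_i c_i \alpha^i$ with $c_i \in \nn_0$, and whenever $\sum_i c_i \ge 2$ one can split off a single summand to decompose $x$ nontrivially in $\nn_0[\alpha]$. In particular, (a)$\Rightarrow$(b) and (c)$\Rightarrow$(b) are immediate. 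For (b)$\Rightarrow$(c) I argue contrapositively: if $1$ admitted a representation $1 = \sum d_i \alpha^i$ with $\sum d_i \ge 2$, multiplying through by $\alpha^n$ would yield such a representation of $\alpha^n$, so no power of $\alpha$ could be an atom, contradicting (b).

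The substantive implication is (c)$\Rightarrow$(a), which I would split according to whether $\alpha < 1$ or $\alpha \ge 1$. When $\alpha < 1$, any representation $\alpha^n = \sum d_j \alpha^j$ with $d_j \in \nn_0$ must have $d_j = 0$ for every $j < n$, because $\alpha^j > \alpha^n$ forces $d_j \alpha^j \le \alpha^n < \alpha^j$ and hence $d_j = 0$; dividing through by $\alpha^n$ in $\rr$ then produces a representation of $1$ with the same number of summands, and (c) forces that to be trivial. Hence every $\alpha^n$ is itself an atom, and every element $\sum c_i \alpha^i$ is visibly a sum of atoms. When $\alpha \ge 1$, the analogous inequality (with its direction reversed) shows that (c) already holds automatically, and atomicity follows by strong induction on $N(x) := \min\{N : x = \sum_{i \le N} c_i \alpha^i \text{ with } c_i \in \nn_0\}$: if $N(x) = N$, then $\alpha^N$ must itself be an atom, since otherwise one could substitute a lower-index expression for $\alpha^N$ into the representation of $x$ and strictly reduce $N(x)$. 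Peeling off $c_N$ copies of $\alpha^N$ and invoking the inductive hypothesis on the residual $\sum_{i < N} c_i \alpha^i$ then closes the argument.

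For the additional statements, by the established equivalence it suffices to show that $1$ is an atom of $\nn_0[\alpha]$. A nontrivial representation of $1$ necessarily has constant coefficient zero (otherwise $d_0 = 1$ would force all remaining terms to vanish), so it produces a nonzero polynomial $p(x) := \sum_{i \ge 1} d_i x^i - 1 \in \zz[x]$ with $p(\alpha) = 0$. By the minimality of $m(x)$ and Gauss's lemma, $m \mid p$ in $\zz[x]$, say $p = mq$ with $q \in \zz[x]$. For (1), evaluating at $0$ gives $m(0) q(0) = p(0) = -1$, whence $|m(0)| = 1$, contradicting the hypothesis. For (2), the coefficient sequence $(-1, d_1, d_2, \ldots)$ of $p$ exhibits at most one sign change, so Descartes' rule of signs bounds the number of positive roots of $p$ (counted with multiplicity) by $1$; yet $m \mid p$ transfers every positive root of $m$ into $p$ with at least its original multiplicity, forcing at least two, a contradiction.

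The main obstacle is (c)$\Rightarrow$(a), where the case split appears unavoidable because the monotonicity of $\{\alpha^n\}_{n \ge 0}$ flips direction at $\alpha = 1$: the clean reduction of a representation of $\alpha^n$ to a representation of $1$ only works when $\alpha < 1$, while for $\alpha \ge 1$ one must run a separate induction on $N(x)$. The most delicate bookkeeping will lie in verifying that the substitution step strictly decreases $N(x)$ and that the coefficients forced to vanish by the inequality arguments really do vanish identically, not merely become small.
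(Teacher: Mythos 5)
Your argument is correct and, for most of the statement, runs along the same lines as the paper's: the cycle (a)\,$\Rightarrow$\,(b)\,$\Rightarrow$\,(c)\,$\Rightarrow$\,(a) via the observation that all atoms lie among the powers $\alpha^n$, the contrapositive for (b)\,$\Rightarrow$\,(c), the coefficient-vanishing argument for $\alpha<1$, and the treatment of (1) and (2) by passing to $p(x)=1-\sum_i c_i x^i$, invoking Gauss's lemma, and applying Descartes' rule are all essentially identical to the paper (including the same mild imprecision about $m(x)$ versus its primitive integer multiple $\ell m(x)$ when $\alpha$ is not an algebraic integer, so I do not count that against you). The one place you genuinely diverge is (c)\,$\Rightarrow$\,(a) for $\alpha\ge 1$: the paper simply observes that $\nn_0[\alpha]$ is generated by an increasing sequence and cites \cite[Theorem~5.6]{fG19}, whereas you run a self-contained strong induction on $N(x)=\min\{N: x=\sum_{i\le N}c_i\alpha^i\}$. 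Your version buys a proof with no external dependency, at the cost of one piece of bookkeeping you must make explicit: when $\alpha>1$, a nontrivial representation $\alpha^N=\sum_j d_j\alpha^j$ with $\sum_j d_j\ge 2$ automatically has $d_j=0$ for all $j\ge N$ (any single term $d_j\alpha^j$ with $j>N$ and $d_j\ge 1$ already exceeds $\alpha^N$, and $d_N\ge 1$ would force all other terms to vanish), so the decomposition really is supported on exponents strictly below $N$ and the substitution step strictly decreases $N(x)$. With that line added, your induction is complete, and in fact it yields the small bonus observation that condition (c) is automatic for $\alpha\ge 1$.
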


\begin{proof}
	(a) $\Rightarrow$ (b): This is clear.
	\smallskip
	
	(b) $\Rightarrow$ (c): Suppose that $1 \notin \mathcal{A}(\nn_0[\alpha])$. Then $1 = \sum_{i=1}^k c_i \alpha^i$ for some $c_1, \dots, c_k \in \nn_0$ with $\sum_{i=1}^k c_i \ge 2$. As a result, $\alpha^n = \sum_{i=1}^k c_i \alpha^{i+n}$ for all $n \in \nn_0$. This implies that $\nn_0[\alpha]$ is antimatter.
	\smallskip
	
	(c) $\Rightarrow$ (a): If $\alpha \ge 1$, then for each $n \in \nn$ the set $\nn_0[\alpha] \cap [0,n]$ is finite, and so the elements of $\nn_0[\alpha]$ are the terms of an increasing sequence. Therefore $\nn_0[\alpha]$ is atomic by~\cite[Theorem~5.6]{fG19}. Now suppose that $\alpha \in (0,1)$. As $\alpha < 1$, we see that $\alpha^i \nmid_{\nn_0[\alpha]} \alpha^j$ whenever $i < j$. Because $1 \in \mathcal{A}(\nn_0[\alpha])$, it follows that $\alpha^n \in \mathcal{A}(\nn_0[\alpha])$ for all $n \in \nn_0$. Thus, $\nn_0[\alpha]$ is atomic, as desired.
	\smallskip
	
	Now assume that $\nn_0[\alpha]$ is atomic, and let us proceed to argue (1) and (2).
	\smallskip
	
	(1) Suppose, for the sake of a contradiction, that  the monoid $\nn_0[\alpha]$ is not atomic. By Theorem~\ref{thm:atomic characterization}, $1$ is not an atom of $\nn_0[\alpha]$ and, therefore, there exist $c_1, \dots, c_n \in \nn_0$ with $1 = \sum_{i=1}^n c_i \alpha^i$. Hence $\alpha$ is a root of $p(x) := 1 - \sum_{i=1}^n c_ix^i \in \qq[x]$. Then write $p(x) = m(x)q(x)$ for some $q(x) \in \qq[x]$. Observe that Gauss' Lemma guarantees that $q(x)$ belongs to $\zz[x]$. Since $p(0) = 1$, the equality $|m(0)| = 1$ holds, which gives the desired contradiction.
	\smallskip
	
	(2) Assume, by way of contradiction, that the monoid $\nn_0[\alpha]$ is not atomic, and write $1 = \sum_{i=1}^n c_i \alpha^i$ for some $c_1, \dots, c_n \in \nn_0$. As we have seen in the previous paragraph, $\alpha$ is a root of the polynomial $p(x) = 1 - \sum_{i=1}^n c_ix^i \in \qq[x]$. It follows now from Descartes' Rule of Signs that $\alpha$ is the only positive root which $p(x)$ can have. Since $m(x)$ is a divisor of $p(x)$ in $\qq[x]$, each root of $m(x)$ must be a root of $p(x)$. Hence $\alpha$ is the only positive root of $m(x)$, a contradiction.
\end{proof}

 The condition $1 \in \mathcal{A}(\nn_0[\alpha])$ in part~(c) of Theorem~\ref{thm:atomic characterization} does not hold in general, which means that there are algebraic numbers $\alpha$ giving antimatter monoids $\nn_0[\alpha]$.

\begin{example}
	Take $\alpha = \frac{\sqrt{5} - 1}{2}$, whose minimal polynomial is $m(x) = x^2 + x - 1$. Since $\alpha$ is a root of $m(x)$, we see that $1 = \alpha^2 + \alpha$. As a consequence, $1$ is not an atom of $\nn_0[\alpha]$, and so Theorem~\ref{thm:atomic characterization} guarantees that $\nn_0[\alpha]$ is antimatter.
\end{example}

As the next example illustrates, none of the sufficient conditions for atomicity we gave as part of Theorem~\ref{thm:atomic characterization} implies the other.

\begin{example}
	Consider the polynomial $m_1(x) = x^2 - 4x + 1$. It is clearly irreducible, and it has two distinct positive real roots: $2 \pm \sqrt{3}$. However, we see that $|m_1(0)| = 1$. Now consider the polynomial $m_2(x) = x^2 + 2x - 2$. It is also irreducible, and it has only one positive real root, namely, $\alpha = \sqrt{3} - 1$. However, $|m_2(0)| \neq 1$.
\end{example}

\smallskip
We proceed to describe the set of atoms of $\nn_0[\alpha]$ when it is atomic.

\begin{prop} \cite[Theorem~4.1]{CG20} \label{prop:atoms of cyclic algebraic semirings}
	If $\nn_0[\alpha]$ is atomic, then the following statements hold.
	\begin{enumerate}
		\item If $\alpha$ is transcendental, then $\mathcal{A}(\nn_0[\alpha]) = \{\alpha^n : n \in \nn_0\}$.
		\smallskip
		
		\item If $\alpha$ is algebraic and $\sigma :=  \min \{n \in \nn \cup \{\infty\} : \alpha^n \in \langle \alpha^j : j \in \ldb 0,n-1 \rdb \rangle \}$, then 
		\begin{itemize}
			\item if $\sigma < \infty$, then $\mathcal{A}(\nn_0[\alpha]) = \{\alpha^n : n \in \ldb 0, \sigma-1 \rdb\}$, and
			\smallskip
			
			\item if $\sigma = \infty$, then $\mathcal{A}(\nn_0[\alpha]) = \{ \alpha^n : n \in \nn_0 \}$.
		\end{itemize}
	\end{enumerate}
\end{prop}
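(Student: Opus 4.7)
The plan is to first establish that every atom of $\nn_0[\alpha]$ must be a power of $\alpha$, and then to decide precisely which powers qualify in each of the two cases. For the preliminary reduction, I observe that any $x \in \nn_0[\alpha]$ admits a representation $x = \sum_j c_j \alpha^j$ with $c_j \in \nn_0$, since $\{\alpha^n : n \in \nn_0\}$ generates $\nn_0[\alpha]$; if $\sum_j c_j \geq 2$, then picking any index $i$ with $c_i > 0$ writes $x = \alpha^i + y$ with both summands nonzero, precluding $x$ from being an atom. Hence every atom is of the form $\alpha^j$ for some $j \in \nn_0$, and I only need to determine which of these powers are atoms.

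For part~(1), transcendence of $\alpha$ makes $\{\alpha^n : n \in \nn_0\}$ a $\qq$-linearly independent set, so the representation $x = \sum c_j \alpha^j$ is unique; thus $\nn_0[\alpha]$ is the free commutative monoid on these powers, and its atoms are precisely $\{\alpha^n : n \in \nn_0\}$. For part~(2) with $\sigma < \infty$ and $n \geq \sigma$, the defining relation $\alpha^\sigma = \sum_{j < \sigma} c_j \alpha^j$ satisfies $\sum c_j \geq 2$ provided $\alpha \neq 1$ (the alternative $\sum c_j = 1$ would force $\alpha^{\sigma - j} = 1$ and hence $\alpha = 1$, a degenerate case in which $\nn_0[\alpha] = \nn_0$ and the set equality of the proposition is immediate); multiplying by $\alpha^{n - \sigma}$ then displays $\alpha^n$ as a nontrivial combination of strictly lower powers, so $\alpha^n$ is not an atom.

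The heart of the proof is showing that $\alpha^n$ is an atom whenever $n < \sigma$, a statement that also subsumes the $\sigma = \infty$ branch. Suppose for contradiction $\alpha^n = \sum_j c_j \alpha^j$ with $\sum_j c_j \geq 2$; nonnegativity of the summands gives the size constraint $c_j \leq \alpha^{n-j}$. When $\alpha > 1$, this forces $c_j = 0$ for every $j > n$, and the surviving equality $\alpha^n = c_n \alpha^n + \sum_{j < n} c_j \alpha^j$ leads either to $c_n = 0$ (whence $\alpha^n \in \langle \alpha^j : j < n \rangle$, contradicting $n < \sigma$) or to $c_n = 1$ with every smaller coefficient vanishing (whence $\sum c_j = 1$). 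When $\alpha < 1$, the same constraint instead kills the $c_j$ with $j < n$, so the equation reduces to $\alpha^n = \sum_{j \geq n} c_j \alpha^j$, and dividing through by $\alpha^n$ produces $1 = \sum_{k \geq 0} c_{n+k}\alpha^k$ with total at least $2$, contradicting the fact (Theorem~\ref{thm:atomic characterization}) that $1$ is an atom of the atomic monoid $\nn_0[\alpha]$. The main obstacle I anticipate is exactly this last step: for $\alpha < 1$ the magnitude inequality alone cannot force a combinatorial contradiction, and the proof closes only by dividing through by $\alpha^n$ and invoking atomicity of $1$.
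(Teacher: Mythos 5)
Your proof is correct and uses essentially the same ingredients as the paper's: magnitude comparisons of powers of $\alpha$ according to whether $\alpha>1$ or $\alpha<1$, the minimality of $\sigma$, and (for $\alpha<1$) the fact that atomicity forces $1$ to be an atom via Theorem~\ref{thm:atomic characterization}. The only difference is organizational — you treat all $n<\sigma$ uniformly and split on the size of $\alpha$, whereas the paper first deduces $\alpha\ge 1$ from $\sigma<\infty$ and splits on whether $\sigma$ is finite — but this does not change the substance of the argument.
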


\begin{proof}
	(1) Suppose that $\alpha$ is transcendental. If $\alpha^n = \sum_{i=0}^N c_i \alpha^i$ for some $n \in \nn_0$, $N \in \nn_{\ge n}$, and coefficients $c_0, \dots, c_N \in \nn_0$, then $\alpha$ is a root of the polynomial $x^n - \sum_{i=0}^N c_i x^i \in \qq[x]$. Since $\alpha$ is transcendental, $c_i = 0$ for every $i \in \ldb 0,N \rdb \setminus \{n\}$ and $c_n = 1$, which implies that $\alpha^n$ is an atom. Hence $\mathcal{A}(\nn_0[\alpha]) = \{\alpha^n : n \in \nn_0\}$, as desired.
	\smallskip
	
	(2) Now suppose that $\alpha$ is an algebraic number. First, we will assume that $\sigma \in \nn$. Since $\alpha^{\sigma} \in \langle \alpha^n : n \in \ldb 0, \sigma-1 \rdb \rangle$, it follows that $\alpha \ge 1$. Note that $\alpha^{\sigma + j} \in \langle \alpha^{n + j} : n \in \ldb 0, \sigma-1 \rdb \rangle$ for all $j \in \nn_0$ and, as a consequence, $\alpha^n \notin \mathcal{A}(\nn_0[\alpha])$ for any $n \ge \sigma$. Now fix $m \in \ldb 0, \sigma - 1 \rdb$, and write $\alpha^m = \sum_{i=0}^k c_i \alpha^i$ for some $c_0, \dots, c_k \in \nn_0$ such that $c_k > 0$. Because $\alpha \ge 1$, we see that $k \le m$. It follows from the minimality of $\sigma$ that $k = m$. As a consequence, $\alpha^m \in \mathcal{A}(\nn_0[\alpha])$. Then we can conclude that $\mathcal{A}(\nn_0[\alpha]) = \{\alpha^n : n \in \ldb 0, \sigma-1 \rdb\}$.
	\smallskip
	
	Finally, we suppose that $\sigma = \infty$. This implies, in particular, that $\alpha \neq 1$. Assume first that $\alpha > 1$. In this case, $\alpha^n$ does not divide $\alpha^m$ in $\nn_0[\alpha]$ for any $n > m$. Therefore $\alpha^m$ is an atom of $\nn_0[\alpha]$ if and only if $\alpha^m$ is not in $\langle \alpha^n : n \in \ldb 0, m-1 \rdb \rangle$. Thus, $\mathcal{A}(\nn_0[\alpha]) = \{\alpha^n : n \in \nn_0\}$. Now assume that $\alpha < 1$. Take $m \in \nn_0$ and suppose that $\alpha^m = \sum_{i=m}^k c_i \alpha^i$ for $c_m, \dots, c_k \in \nn_0$. Observe that $c_m > 0$ as otherwise $1 = \sum_{i=m+1}^k c_i \alpha^{i-m} \notin \mathcal{A}(\nn_0[\alpha])$ and so $\nn_0[\alpha]$ would not be atomic. Then $c_m = 1$, and so $\alpha^m \in \mathcal{A}(\nn_0[\alpha])$. Hence $\mathcal{A}(\nn_0[\alpha]) = \{ \alpha^n : n \in \nn_0 \}$, as desired.
\end{proof}

Observe that we did not use the atomicity of $\nn_0[\alpha]$ to argue that $\mathcal{A}(\nn_0[\alpha]) = \{\alpha^n : n \in \nn_0\}$ in part~(1) of Proposition~\ref{prop:atoms of cyclic algebraic semirings}. Thus, we have that $\nn_0[\alpha]$ is atomic for every transcendental number $\alpha$; indeed, in this case, $\nn_0[\alpha]$ is a free commutative monoid and, therefore, a UFM.

The next corollary follows immediately from Proposition~\ref{prop:atoms of cyclic algebraic semirings}.

\begin{cor} \cite[Corollary~4.3]{CG20}
	For $\alpha \in \rr_{>0}$, the monoid $\nn_0[\alpha]$ is finitely generated if and only if there is an $n \in \nn_0$ such that $\mathcal{A}(\nn_0[\alpha]) = \{\alpha^j : j \in \ldb 0,n \rdb \}$.
\end{cor}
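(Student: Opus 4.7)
The plan is to obtain both implications directly from Proposition~\ref{prop:atoms of cyclic algebraic semirings}, using only the ancillary facts that every finitely generated (cancellative, commutative) monoid is atomic and has finitely many atoms, and that an atomic monoid is generated by its set of atoms.

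For the forward implication, suppose $\nn_0[\alpha]$ is finitely generated. Then it is atomic (indeed an FFM, by \cite[Proposition~2.7.8]{GH06}) and $\mathcal{A}(\nn_0[\alpha])$ is a finite set, because any generating set must contain every atom. Now apply Proposition~\ref{prop:atoms of cyclic algebraic semirings}. Part~(1) is ruled out: if $\alpha$ were transcendental, $\mathcal{A}(\nn_0[\alpha])$ would equal $\{\alpha^n : n \in \nn_0\}$, which is infinite. So $\alpha$ is algebraic, and part~(2) applies. The case $\sigma = \infty$ again yields $\mathcal{A}(\nn_0[\alpha]) = \{\alpha^n : n \in \nn_0\}$, which is infinite and hence impossible. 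Thus $\sigma \in \nn$, and we conclude $\mathcal{A}(\nn_0[\alpha]) = \{\alpha^j : j \in \ldb 0, \sigma - 1 \rdb\}$, giving the required form with $n = \sigma - 1$.

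For the reverse implication, assume there is $n \in \nn_0$ with $\mathcal{A}(\nn_0[\alpha]) = \{\alpha^j : j \in \ldb 0, n \rdb\}$. Then $1 = \alpha^0 \in \mathcal{A}(\nn_0[\alpha])$, so by Theorem~\ref{thm:atomic characterization} the monoid $\nn_0[\alpha]$ is atomic. Since an atomic monoid is generated by its atoms, $\nn_0[\alpha]$ is generated by the finite set $\{\alpha^j : j \in \ldb 0, n \rdb\}$, so it is finitely generated.

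There is no real obstacle: the corollary is essentially a bookkeeping consequence of Proposition~\ref{prop:atoms of cyclic algebraic semirings} once one observes that ``finitely generated'' forces both atomicity and finiteness of $\mathcal{A}(\nn_0[\alpha])$. The only subtlety worth flagging is the use of Theorem~\ref{thm:atomic characterization} in the reverse direction to cleanly deduce atomicity from $1 \in \mathcal{A}(\nn_0[\alpha])$, rather than reproving it from the explicit description of the atom set.
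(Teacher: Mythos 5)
Your proof is correct and matches the paper's intent: the paper gives no written proof, stating only that the corollary ``follows immediately from Proposition~\ref{prop:atoms of cyclic algebraic semirings},'' and your argument is precisely the routine deduction being alluded to (ruling out the infinite-atom cases for the forward direction, and using atomicity plus generation by atoms for the reverse). The one detail you add beyond the proposition --- invoking Theorem~\ref{thm:atomic characterization} to get atomicity from $1 \in \mathcal{A}(\nn_0[\alpha])$ in the reverse direction --- is a legitimate and clean way to discharge the atomicity hypothesis.
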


\bigskip

%-------------------------------------------------------------------------------
%-------------------------------------------------------------------------------
%-------------------------------------------------------------------------------

\section{Atomic Positive Monoids Without the ACCP}
\label{sec:ACCP}

It is not hard to argue that every monoid satisfying the ACCP is atomic. However, the converse does not hold in general. Indeed, there are integral domains satisfying the ACCP that are not atomic. The first of such examples was constructed in 1974 by Grams~\cite{aG74}, and further examples were given by Zaks in~\cite{aZ82} and, more recently, by Boynton and Coykendall in~\cite{BC19}. It turns out that there exist valuations of $\nn_0[x]$ that are atomic but do not satisfy the ACCP, and we will construct some of them in this section. Before offering a necessary condition for $\nn_0[x]$ to satisfy the ACCP, we introduce some needed terminology.
\smallskip

For a polynomial $f(x) \in \qq[x]$, we call the set of exponents of the monomial summands of $f(x)$ the \emph{support} of $f(x)$, and we denote it by $\supp \, f(x)$, i.e., $\supp \, f(x) := \{n \in \nn_0 : f^{(n)}(0) \neq 0 \}$, where $f^{(n)}$ denotes the $n$-th formal derivative of $f$. Assume that $\alpha \in \mathbb{C}$ is algebraic over $\qq$, and let $m(x)$ be the  minimal polynomial of $\alpha$. Clearly, there exists a unique $\ell \in \nn$ such that $\ell m(x)$ has content~$1$. Also, there exist unique polynomials $p(x)$ and $q(x)$ in $\nn_0[x]$ such that  $\ell m(x) = p(x) - q(x)$ and $\supp \, p(x) \bigcap \supp \, q(x) = \emptyset$. We say that $(p(x), q(x))$ is the \emph{minimal pair} of $\alpha$.

\begin{prop} \cite[Theorem~4.7]{CG20} \label{prop:ACCP necessary condition}
	Let $\alpha$ be an algebraic number in $(0,1)$ with minimal pair $(p(x),q(x))$. If $\nn_0[\alpha]$ satisfies the ACCP, then $p(x) - x^m q(x)$ is not in $\nn_0[x]$ for any $m \in \nn_0$.
\end{prop}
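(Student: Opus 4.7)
The plan is to prove the contrapositive: assuming $r(x) := p(x) - x^m q(x) \in \nn_0[x]$ for some $m \in \nn_0$, I would exhibit a strictly ascending chain of principal ideals in $\nn_0[\alpha]$. Before building the chain I would dispose of the case $m = 0$. Then $p(x) - q(x) \in \nn_0[x]$, and since $p(x)$ and $q(x)$ sit in $\nn_0[x]$ with disjoint supports, this forces $q(x) = 0$, making $\ell m(x) = p(x)$ a nonzero polynomial in $\nn_0[x]$ with the positive root $\alpha$, which is absurd. So I may assume $m \ge 1$.

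Next I would evaluate at $\alpha$. Because $\alpha$ is a root of $m(x)$ we have $p(\alpha) = q(\alpha)$, so setting $a := q(\alpha)$ yields the identity
\[
a - \alpha^m a = r(\alpha) \in \nn_0[\alpha].
\]
Here $a > 0$, because $q(x)$ is a nonzero polynomial in $\nn_0[x]$ (otherwise we would again be in the excluded case) and $\alpha > 0$; also $1 - \alpha^m > 0$ since $\alpha \in (0,1)$.

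With this identity in hand I would look at the sequence $a_n := \alpha^{nm} a$ for $n \ge 0$. Scaling the identity by $\alpha^{nm}$ gives
\[
a_n - a_{n+1} = \alpha^{nm} r(\alpha) \in \nn_0[\alpha],
\]
so $a_{n+1}$ divides $a_n$ in $\nn_0[\alpha]$ for every $n$. This produces the ascending chain of principal ideals
\[
a_0 + \nn_0[\alpha] \subseteq a_1 + \nn_0[\alpha] \subseteq a_2 + \nn_0[\alpha] \subseteq \cdots .
\]
Since $\nn_0[\alpha]$ is reduced, two principal ideals coincide only when their generators are equal; and $a_n = a_{n+1}$ would force $\alpha^{nm} a(1 - \alpha^m) = 0$, which contradicts $a, \alpha > 0$ and $\alpha^m < 1$. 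Hence the chain is strictly ascending and ACCP fails.

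The main obstacle, as I see it, is the conceptual leap from the polynomial-level hypothesis to a monoid-theoretic ascending chain; the bridge is the identity $a(1 - \alpha^m) = r(\alpha)$, which repackages the assumption $r(x) \in \nn_0[x]$ as the divisibility $\alpha^m a \mid a$ in $\nn_0[\alpha]$, and once this is spotted iterating by powers of $\alpha^m$ is a short telescoping argument. The $m = 0$ case is a minor but genuine side check, and it is precisely the disjoint-support condition built into the definition of the minimal pair that rules it out.
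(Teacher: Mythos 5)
Your proof is correct and follows essentially the same route as the paper: both build the ascending chain $\big(q(\alpha)\alpha^{nm} + \nn_0[\alpha]\big)_{n}$ from the identity $q(\alpha)\alpha^{nm} = r(\alpha)\alpha^{nm} + q(\alpha)\alpha^{(n+1)m}$, the only difference being that you argue the contrapositive and check strictness of the chain directly (via reducedness), while the paper assumes the ACCP and derives a contradiction from stabilization. Your separate treatment of $m=0$ is a worthwhile extra care that the paper glosses over, since for $m=0$ that chain is constant and the conclusion must instead come from the disjoint-support condition in the definition of the minimal pair, exactly as you observe.
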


\begin{proof}
	Assume that the monoid $\nn_0[\alpha]$ satisfies the ACCP. Now suppose, by way of contradiction, that there exists $m \in \nn_0$ with $f(x) := p(x) - x^mq(x) \in \nn_0[x]$. For each $n \in \nn$, we see that
	\[
		q(\alpha)\alpha^{nm} = p(\alpha) \alpha^{nm} = \big( f(\alpha) + \alpha^m q(\alpha) \big) \alpha^{nm} = f(\alpha) \alpha^{nm} + q(\alpha) \alpha^{(n+1)m}.
	\]
	Therefore $\big( q(\alpha)\alpha^{nm} + \nn_0[\alpha] \big)_{n \in \nn}$ is an ascending chain of principal ideals in $\nn_0[\alpha]$. Since $\nn_0[\alpha]$ satisfies the ACCP, such a sequence must eventually stabilize. However, this would imply that $q(\alpha)\alpha^{nm} = \min (q(\alpha)\alpha^{nm} + \nn_0[\alpha]) = \min (q(\alpha)\alpha^{(n+1)m} + \nn_0[\alpha]) = q(\alpha)\alpha^{(n+1)m}$ for some $n \in \nn$, which is clearly a contradiction.
\end{proof}

As a consequence of Proposition~\ref{prop:ACCP necessary condition}, we obtain the following.

\begin{cor} \cite[Theorem~6.2]{GG18}, \cite[Corollary~4.4]{CGG21} \label{cor:atomic rational cyclic semirings without ACCP}
	If $q$ is a rational number in $(0,1)$ such that $\mathsf{n}(q) \ge 2$, then $\nn_0[q]$ is an atomic monoid that does not satisfy the ACCP.
\end{cor}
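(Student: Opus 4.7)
The plan is to deduce atomicity from Theorem~\ref{thm:atomic characterization} and to get failure of the ACCP from the contrapositive of Proposition~\ref{prop:ACCP necessary condition}. Write $q = a/b$ with $a := \mathsf{n}(q) \ge 2$ and $b := \mathsf{d}(q)$; the standing hypotheses $\gcd(a,b) = 1$ and $q \in (0,1)$ force $b > a \ge 2$. The minimal polynomial of $q$ over $\qq$ is $m(x) = x - a/b$, so multiplying by $b$ yields the content-$1$ polynomial $bx - a$, and therefore the minimal pair of $q$ is $\bigl(p(x), q(x)\bigr) = (bx, a)$.

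For atomicity, by Theorem~\ref{thm:atomic characterization} it suffices to verify that $1 \in \mathcal{A}(\nn_0[q])$. I would argue this directly: if $1 = \sum_{i=1}^{n} c_i q^i$ with $c_i \in \nn_0$ not all zero, then clearing the denominator $b^n$ gives
\[
	b^n \ = \ \sum_{i=1}^{n} c_i \, a^i \, b^{n-i},
\]
and the right-hand side is divisible by $a$. Since $a \ge 2$ and $\gcd(a,b) = 1$, we have $a \nmid b^n$, a contradiction. Hence $1$ is an atom, and Theorem~\ref{thm:atomic characterization} yields that $\nn_0[q]$ is atomic.

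For the failure of the ACCP, the contrapositive of Proposition~\ref{prop:ACCP necessary condition} reduces the task to exhibiting some $m \in \nn_0$ for which $p(x) - x^m q(x) \in \nn_0[x]$. With the minimal pair $(bx, a)$ in hand, the choice $m = 1$ gives
\[
	p(x) - x \cdot q(x) \ = \ bx - ax \ = \ (b - a)x,
\]
which lies in $\nn_0[x]$ precisely because $b > a$. Consequently, $\nn_0[q]$ does not satisfy the ACCP.

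No step in this plan is truly difficult. The only genuinely delicate point is the atomicity argument: because $q$ is rational, part~(1) of Theorem~\ref{thm:atomic characterization} is not available, so one is forced back on part~(c) and has to produce the coprimality argument above. It is there, and only there, that the hypothesis $\mathsf{n}(q) \ge 2$ enters; the hypothesis $q < 1$ plays its corresponding role in the ACCP half, by guaranteeing $b > a$ so that $(b-a)x \in \nn_0[x]$.
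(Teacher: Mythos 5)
Your proposal is correct and follows the route the paper intends: atomicity via condition (c) of Theorem~\ref{thm:atomic characterization} (your coprimality computation showing $1\in\mathcal{A}(\nn_0[q])$ is exactly where $\mathsf{n}(q)\ge 2$ is needed), and failure of the ACCP via the contrapositive of Proposition~\ref{prop:ACCP necessary condition} with minimal pair $(bx,a)$ and $m=1$. The paper states the corollary as an immediate consequence of that proposition without writing out these details, and your write-up supplies them accurately.
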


As we have mentioned before, for each $\alpha \in \rr_{> 0}$, the monoid $\nn_0[\alpha]$ is indeed a semiring. When $\alpha$ is  a transcendental number, $\nn_0[\alpha] \cong \nn_0[x]$ as semirings, and, therefore, a simple degree argument shows that the multiplicative monoid $\nn_0[\alpha]^\bullet$ is atomic. Factorizations of the multiplicative monoid $\nn_0[x]^\bullet$ were studied by Campanini and Facchini in~\cite{CF19}. However, the following question remains unanswered.

\begin{question}\footnote{A version of this question is stated in \cite[Section~3]{BG20} as a conjecture.}
	For which algebraic numbers $\alpha \in \rr_{> 0}$ is the multiplicative monoid $\nn_0[\alpha]^\bullet$ atomic?
\end{question}

We can actually use Corollary~\ref{cor:atomic rational cyclic semirings without ACCP} to construct positive monoids of any prescribed rank that are atomic, but do not satisfy the ACCP. As far as we know, the following result does not appear in the current literature.

\begin{prop} \label{prop:atomic positive monoid without the ACCP}
	For any rank $s \in \nn$, there exists an atomic positive monoid with rank $s$ that does not satisfy the ACCP.
\end{prop}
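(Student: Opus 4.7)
The plan is to bootstrap from the rank-$1$ case, which is Corollary~\ref{cor:atomic rational cyclic semirings without ACCP}, to an arbitrary rank $s$ via a direct-sum construction realized inside $\rr_{\ge 0}$ using $\qq$-linearly independent positive real scalars. For $s = 1$ I would simply take $M := \nn_0[2/3]$: Corollary~\ref{cor:atomic rational cyclic semirings without ACCP} guarantees that it is an atomic positive monoid without the ACCP, and, being a nonzero submonoid of $\qq$, its $\zz$-rank is $1$.

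For $s \ge 2$, I would fix $N := \nn_0[2/3]$ and pick positive reals $\alpha_1, \ldots, \alpha_s$ that are linearly independent over $\qq$ (for instance $\alpha_i := \pi^{i-1}$, or any set of algebraically independent transcendentals). Then define the positive monoid
\[
M := \alpha_1 N + \alpha_2 N + \cdots + \alpha_s N \subseteq \rr_{\ge 0}.
\]
The heart of the argument is the claim that the natural additive map
\[
\Phi \colon N^s \longrightarrow M, \qquad (n_1, \ldots, n_s) \longmapsto \sum_{i=1}^s \alpha_i n_i
\]
is a monoid isomorphism. Surjectivity is immediate from the definition of $M$, while injectivity exploits that $N \subseteq \qq$: any equality $\sum_{i=1}^s \alpha_i (n_i - n_i') = 0$ in $\rr$ has coefficients in $\qq$, so the $\qq$-linear independence of the $\alpha_i$ forces $n_i = n_i'$ for every $i$.

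Granted the isomorphism $M \cong N^s$, the three required properties follow routinely. The rank of $M$ equals the $\zz$-rank of $\gp(M) \cong \gp(N)^s$, which is $s$ because $\gp(N)$ is a nontrivial subgroup of $\qq$ and hence has $\zz$-rank $1$. For atomicity, the atoms of $N^s$ are precisely the tuples with a single coordinate an atom of $N$ and all other coordinates zero, and each tuple decomposes coordinatewise as a sum of such atoms since $N$ is atomic. For the failure of the ACCP, any non-stabilizing ascending chain of principal ideals $(a_k + N)_{k \in \nn}$ in $N$ lifts to the non-stabilizing ascending chain $\bigl((a_k, 0, \ldots, 0) + N^s\bigr)_{k \in \nn}$ in $N^s$. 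The one step that requires genuine care is the verification that $\Phi$ is an isomorphism: the $\qq$-linear independence of the $\alpha_i$, combined with $N \subseteq \qq$, is precisely what converts the sum inside $\rr_{\ge 0}$ into a genuine direct sum of monoids. I do not anticipate any other serious obstacle.
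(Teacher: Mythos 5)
Your proposal is correct and follows essentially the same strategy as the paper: both realize a rank-$s$ direct sum inside $\rr_{\ge 0}$ via $\qq$-linearly independent positive scalars, with the rank-one atomic non-ACCP Puiseux monoid of Corollary~\ref{cor:atomic rational cyclic semirings without ACCP} supplying both atomicity and the failure of the ACCP. The only cosmetic difference is that the paper pads the rank with $s-1$ scaled copies of $\nn_0$ rather than $s$ scaled copies of $\nn_0[q]$, which changes nothing essential in the argument.
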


\begin{proof}
	Fix $s \in \nn$. Since $\rr$ is an infinite-dimensional vector space over $\qq$, we can take $S \subset \rr_{> 0}$ to be a linearly independent set over $\qq$ such that $|S| = s-1$ and $\qq \cap S = \emptyset$. Take then $q \in \qq \cap (0,1)$ with $\mathsf{n}(q) \neq 1$. Because $\nn_0[q]$ is a Puiseux monoid, $\gp(\nn_0[q])$ is an additive subgroup of $\qq$, and so $\rank(\nn_0[q]) = 1$. Now consider the positive monoid $M := \langle \nn_0[q] \cup S \rangle$. It is not hard to see that $M = \nn_0[q] \bigoplus_{s \in S} s\nn_0$. Since $\rank(\nn_0[q]) = 1$, it follows that $\rank(M) = \rank(\nn_0[q]) + s - 1 = s$. Because all direct summands in $\nn_0[q] \bigoplus_{s \in S \setminus \{1\}} s\nn_0$ are atomic, $M$ must be atomic. Consider the sequence of principal ideals $(\mathsf{n}(q)q^n + \nn_0[q])_{n \in \nn_0}$ of $\nn_0[q]$. Since
	\[
		\mathsf{n}(q)q^n = \mathsf{d}(q)q^{n+1} = (\mathsf{d}(q) - \mathsf{n}(q))q^{n+1} + \mathsf{n}(q)q^{n+1},
	\]
	$\mathsf{n}(q)q^{n+1} \mid_{\nn_0[q]} \mathsf{n}(q)q^n$ for every $n \in \nn_0$. Therefore $(\mathsf{n}(q)q^n + \nn_0[q])_{n \in \nn_0}$ is an ascending chain of principal ideals. In addition, it is clear that such a chain of ideals does not stabilize. As a result, $\nn_0[q]$ does not satisfy the ACCP, from which we obtain that $M$ does not satisfy the ACCP.
\end{proof}

Proposition~\ref{prop:atomic positive monoid without the ACCP} allows us to state the following remark in connection to Diagram~\eqref{diag:AAZ's chain for monoids}.

\begin{remark}
	The converse of the implication \textbf{ACCP} $\Rightarrow$ \textbf{atomic} does not hold in the class of positive monoids.
\end{remark}
\smallskip

Our next task will be to construct, for each cardinal number in $\nn$, a class of positive monoids satisfying the ACCP but failing to be BFMs. To do so, we first construct Puiseux monoids that satisfy the ACCP but are not atomic, and then we achieve positive monoids with any prescribed rank by mimicking the technique used in the proof of Proposition~\ref{prop:atomic positive monoid without the ACCP}.

\begin{prop} \cite[Example~2.1]{AAZ90}, \cite[Proposition~4.2.2]{fG21} \label{prop:a class of ACCP monoids}
		For any $s \in \nn_0$, there exists a positive monoid with rank $s$ that satisfies the ACCP but is not a BFM.
\end{prop}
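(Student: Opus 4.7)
The plan is to combine a rank-one Puiseux monoid satisfying the ACCP but failing to be a BFM with the direct-sum technique used in the proof of Proposition~\ref{prop:atomic positive monoid without the ACCP}. I would first invoke (or recall the construction behind) \cite[Proposition~4.2.2]{fG21}, itself modelled on \cite[Example~2.1]{AAZ90}, to produce a Puiseux monoid $N$ that satisfies the ACCP but is not a BFM. A natural concrete candidate is $N := \langle 1/p_n : n \in \nn \rangle$, where $(p_n)_{n \in \nn}$ enumerates the primes; the identity $1 = p_n \cdot (1/p_n)$ gives $\{p_n : n \in \nn\} \subseteq \mathsf{L}(1)$, so $N$ is not a BFM.

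Verifying the ACCP for $N$ is the substantive step, and I would carry it out by showing that for every $b \in N$ the set of divisors of $b$ in $N$ is finite. This suffices, because in any ascending chain $b_1 + N \subseteq b_2 + N \subseteq \cdots$ each $b_i$ divides $b_1$ in $N$. Every element of $N$ has squarefree denominator, and a $p$-adic valuation analysis of a decomposition $b = d + e$ with $d, e \in N$ shows that whenever some prime $p$ lies in $\supp d \setminus \supp b$, it must also lie in $\supp e$ with a forced complementary residue modulo~$p$. Iterating this constraint and exploiting the finiteness of $\supp b$ yields the finiteness of the divisor set of $b$.

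Once such an $N$ is available, the rank-lifting step parallels the proof of Proposition~\ref{prop:atomic positive monoid without the ACCP}. For $s \in \nn$, I would use that $\rr$ is infinite-dimensional over $\qq$ to pick a $\qq$-linearly independent set $\{r_1, \ldots, r_{s-1}\} \subset \rr_{>0}$ disjoint from $\qq$, and set
\[
M := \langle N \cup \{r_1, \ldots, r_{s-1}\} \rangle = N \oplus \bigoplus_{i=1}^{s-1} r_i \nn_0,
\]
the direct sum decomposition following from the $\qq$-linear independence of $\{r_1, \ldots, r_{s-1}\}$ over $\gp(N) \subseteq \qq$. Then $\rank(M) = \rank(N) + (s-1) = s$; $M$ inherits the ACCP from its summands (each $r_i \nn_0$ is a numerical monoid, and $N$ satisfies the ACCP by the previous step); and $M$ is not a BFM because the element $1 \in N \subseteq M$ still has an unbounded set of lengths.

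The principal obstacle is the ACCP argument for $N$: unlike the BFM property, the ACCP for a Puiseux monoid is not implied by the metric condition $\inf N^\bullet > 0$ (which fails here, since $1/p_n \to 0$), so the argument must be arithmetic rather than metric, and it must carefully rule out divisors of $b$ whose supports extend beyond $\supp b$.
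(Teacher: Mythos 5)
Your overall architecture matches the paper's: the paper also takes $M = \langle 1/d_n : n \in \nn\rangle$ for pairwise coprime $d_n \ge 2$ (your choice of the primes is the obvious instance), gets non-BFM from $1 = d_n\cdot\tfrac{1}{d_n}$, and lifts to rank $s$ by the same direct-sum trick as in Proposition~\ref{prop:atomic positive monoid without the ACCP}. The problem is the step you yourself flag as substantive: your proposed route to the ACCP is not just incomplete, it is false.

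You propose to show that every $b \in N$ has only finitely many divisors in $N$. But $1 - \tfrac{c}{p_n} = (p_n - c)\cdot\tfrac{1}{p_n} \in N$ for every $n$ and every $c \in \ldb 0, p_n \rdb$, so $\tfrac{c}{p_n} \mid_N 1$ for all such $c$ and $n$: the element $1$ already has infinitely many divisors, and their supports are not controlled by $\supp(1) = \emptyset$, so the valuation argument you sketch cannot close. Worse, finiteness of divisor sets would contradict the other half of your own proof: the partial sums of any factorization $a_1 + \cdots + a_\ell$ of $b$ are $\ell$ pairwise distinct divisors of $b$ (atoms are positive reals), so ``finitely many divisors of every element'' forces bounded lengths, i.e.\ $N$ would be a BFM --- exactly what you correctly showed it is not. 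The ACCP here must be established by a weaker, well-founded invariant rather than by finiteness of divisor sets. The paper does this by proving that every $q \in M^\bullet$ has a \emph{unique} canonical form $q = n + \sum_{i=1}^k c_i\frac{1}{d_i}$ with $c_i \in \ldb 0, d_i - 1\rdb$ (uniqueness via the pairwise coprimality, essentially the residue analysis you allude to), and then observing that if $q'$ properly divides $q$ then either $N(q') < N(q)$ or $N(q') = N(q)$ and $S(q') < S(q)$, where $N(q) := n$ and $S(q) := \sum_i c_i$. Descending divisibility chains therefore terminate, which gives the ACCP even though divisor sets are infinite. You should replace your finiteness claim with this (or some equivalent) monovariant argument; the rest of your proposal then goes through as in the paper.
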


\begin{proof}
	Let $(d_n)_{n \in \nn}$ be a strictly increasing sequence of positive integers with $d_1 \ge 2$ such that $\gcd(d_i, d_j) = 1$ for any distinct $i,j \in \nn$. Consider the monoid $M := \langle 1/d_n : n \in \nn \rangle$. It is not hard to verify that $1/d_j \in \mathcal{A}(M)$ for every $j \in \nn$ and, therefore, $M$ is an atomic monoid with $\mathcal{A}(M) = \{1/d_j : j \in \nn\}$. In addition, we can easily check that for each $q \in M^\bullet$, we can take $n \in \nn_0$ and $c_1, \dots, c_k \in \nn_0$ with $c_k \neq 0$ satisfying 
	\begin{align} \label{eq:decomposition existence}
		q = n + \sum_{i=1}^k c_i \frac{1}{d_i},
	\end{align}
	where $c_i \in \ldb 0,d_i - 1 \rdb$ for each $i \in \ldb 1,k \rdb$. We claim that the decomposition in~(\ref{eq:decomposition existence}) is unique. To argue our claim, take $n' \in \nn_0$ and $c'_1, \dots, c'_m \in \nn_0$ with $c'_m \neq 0$ and $c'_i \in \ldb 0, d_i - 1 \rdb$ for all $i \in \ldb 1, m \rdb$ such that
	\begin{align} \label{eq:decomposition uniqueness}
		n + \sum_{i=1}^k c_i \frac{1}{d_i} = n' + \sum_{i=1}^m c'_i \frac{1}{d_i}.
	\end{align}
	We can complete with zero coefficients if necessary to assume, without loss of generality, that $m = k$. Set $d = d_1 \cdots d_k$, and $n_i := d/d_i \in \nn$ for every $i \in \ldb 1, k \rdb$. Now, for each $j \in \ldb 1,k \rdb$, we can rewrite~\eqref{eq:decomposition uniqueness} as follows:
	\[
		(c_j - c'_j) n_j = (n' - n)d + \sum_{i \neq j} (c'_i - c_i)n_i.
	\]
	Since $d_j \mid d$ and $d_j \mid n_i$ for every $i \in \ldb 1,k \rdb \setminus \{j\}$, the right-hand side of the last equality is divisible by $d_j$. Thus, $(c_j - c'_j) n_j$ is divisible by $d_j$ and because $\gcd(n_j, d_j) = 1$, we see that $d_j \mid (c_j - c'_j)$. This implies that $c'_j = c_j$ for each $j \in \ldb 1, k \rdb$, and so $n' = n$. Hence the decomposition in \eqref{eq:decomposition existence} is unique, as claimed.
	
	With notation as in~\eqref{eq:decomposition existence}, define $N(q) := n$ and $S(q) := \sum_{i=1}^k c_i$. If $q'$ divides $q$ in $M$, then it is clear that $N(q') \le N(q)$. Also, if $q'$ properly divides $q$ in $M$, then the equality $N(q') = N(q)$ guarantees that $S(q') < S(q)$. Putting the last two observations together, we conclude that each sequence $(q_n)_{n \in \nn}$ in $M$ satisfying $q_{n+1} \mid_M q_n$ for every $n \in \nn$ must eventually terminate. Hence the Puiseux monoid $M$ satisfies the ACCP.
	
	Since $M$ is a Puiseux monoid, $\rank(M) = 1$. In addition, we observe that $d_n \in \mathsf{L}_M(1)$ for every $n \in \nn$ because $1 = d_n \frac{1}{d_n}$, whence $M$ is not a BFM. Thus, we have found a rank-one positive monoid that satisfies the ACCP but is not a BFM.
	
	Now, as we did in the proof of Proposition~\ref{prop:atomic positive monoid without the ACCP}, let us take a $\qq$-linearly independent set $S \subset \rr_{> 0}$ such that $|S| = s-1$ and $\qq \cap S = \emptyset$. Then consider the positive monoid $M_s := M \bigoplus_{s \in S} s\nn_0$. As $\rank(M) = 1$, it follows that $\rank(M_s) = s$. In addition, since $M$ satisfies the ACCP, each direct summand of $M_s$ satisfies the ACCP, which implies that $M_s$ also satisfies the ACCP. Since $M$ is a divisor-closed submonoid of $M_s$, the fact that $M$ is not a BFM immediately implies that $M_s$ is not a BFM. Thus, the positive monoid $M_s$ has rank $s$, satisfies the ACCP, but is not a BFM.
\end{proof}

Then we can state the following remark in connection to Diagram~\eqref{diag:AAZ's chain for monoids}.

\begin{remark}
	The converse of the implication \textbf{BFM} $\Rightarrow$ \textbf{ACCP} does not hold in the class of positive monoids.
\end{remark}
\smallskip

\bigskip

%-------------------------------------------------------------------------------
%-------------------------------------------------------------------------------
%-------------------------------------------------------------------------------

\section{The Bounded Factorization Property}

We begin this section providing two equivalent sufficient conditions for a positive monoid to be a BFM.

\begin{prop} \cite[Proposition~4.5]{fG19} \label{prop:BF sufficient condition}
	For a positive monoid $M$ the following statements are equivalent.
	\begin{enumerate}
		\item $\inf M^\bullet > 0$.
		\smallskip
		
		\item $M$ is atomic and $\inf \mathcal{A}(M) > 0$.
	\end{enumerate}
	In addition, any of the above conditions implies that $M$ is a BFM.
\end{prop}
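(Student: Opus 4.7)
The plan is to prove the cyclic implications $(1) \Rightarrow (2) \Rightarrow (1)$ and then deduce the bounded-factorization conclusion from condition~(2), which by then will be known to follow from either hypothesis.

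For $(1) \Rightarrow (2)$, set $\epsilon := \inf M^\bullet > 0$. The first step is to verify atomicity. Given $b \in M^\bullet$, I would consider any decomposition $b = b_1 + \cdots + b_n$ with each $b_i \in M^\bullet$; since every summand satisfies $b_i \ge \epsilon$, we obtain $n \le b/\epsilon$, so the length of such decompositions is bounded. Selecting one of maximum possible length forces every summand to be an atom, for otherwise a non-atomic summand could be split into two nonzero positive pieces, producing a strictly longer decomposition and contradicting maximality. Hence $b$ is a sum of atoms. The inequality $\inf \mathcal{A}(M) \ge \inf M^\bullet > 0$ then follows immediately from the inclusion $\mathcal{A}(M) \subseteq M^\bullet$.

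For $(2) \Rightarrow (1)$, the argument is even shorter: by atomicity, each $b \in M^\bullet$ admits a decomposition $b = a_1 + \cdots + a_n$ with $n \ge 1$ and every $a_i \in \mathcal{A}(M)$, so $b \ge a_1 \ge \inf \mathcal{A}(M) > 0$, giving $\inf M^\bullet \ge \inf \mathcal{A}(M) > 0$.

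For the BFM claim, I would fix $b \in M^\bullet$ and any factorization $z \in \mathsf{Z}(b)$. Applying the factorization homomorphism to $z$ expresses $b$ as a sum of $|z|$ atoms of $M$, each bounded below by $\inf \mathcal{A}(M)$, yielding
\[
    b \ \ge \ |z| \cdot \inf \mathcal{A}(M), \qquad \text{hence} \qquad |z| \ \le \ \frac{b}{\inf \mathcal{A}(M)}.
\]
Thus $\mathsf{L}(b)$ is a bounded subset of $\nn$ and therefore finite, so $M$ is a BFM. No substantive obstacle arises in the proof: the whole argument is driven by the single observation that, once the atoms are bounded away from zero on the real line, length is automatically controlled by the real-valued size of the element.
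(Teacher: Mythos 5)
Your proof is correct and follows essentially the same route as the paper's: atomicity via a maximum-length decomposition argument bounded by $b/\inf M^\bullet$, the reverse implication via an atom dividing each nonzero element, and the BFM conclusion from the length bound $|z| \le b/\inf \mathcal{A}(M)$. The only cosmetic difference is that you use $\inf M^\bullet$ itself as the lower bound where the paper picks an $\epsilon$ strictly below it; both work since the infimum is already a lower bound on the summands.
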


\begin{proof}
	(1) $\Rightarrow$ (2): Since $\inf M^\bullet > 0$, the inclusion $\mathcal{A}(M) \subseteq M^\bullet$ guarantees that $\inf \mathcal{A}(M) > 0$. Let us verify now that $M$ is atomic. Because $\inf M^\bullet > 0$, we can take $\epsilon \in \rr_{> 0}$ satisfying $\epsilon < \inf M^\bullet$. Take now $y \in M^\bullet$ such that $y = b_1 + \cdots + b_n$ for some $b_1, \dots, b_n \in M^\bullet$. Then $y \ge n \min\{b_1, \dots, b_n\} \ge n \epsilon$, which implies that $n \le y/\epsilon$. Then there exists a maximum $m \in \nn$ such that $y = a_1 + \cdots + a_m$ for some $a_1, \dots, a_m \in M^\bullet$. In this case, the maximality of $m$ ensures that $a_1, \dots, a_m \in \mathcal{A}(M)$. As a result, $M$ must be atomic.
	\smallskip
	
	(2) $\Rightarrow$ (1): Take $\epsilon \in \rr_{> 0}$ such that $\epsilon < \inf \mathcal{A}(M)$. For each $r \in M^\bullet$, the fact that $M$ is atomic guarantees the existence of $a \in \mathcal{A}(M)$ dividing $r$ in $M$, and so $r \ge a > \epsilon$. As a result, $\inf M^\bullet > 0$.
	\smallskip
	
	We have seen in the first paragraph that if we take $\epsilon \in \rr_{> 0}$ with $\epsilon < \inf M^\bullet$, then each $y \in M^\bullet$ can be written as the sum of at most $\lfloor y/\epsilon \rfloor$ atoms, and this implies that $\mathsf{L}(y)$ is bounded.  As a consequence, $M$ is a BFM.
\end{proof}

The reverse implication of Proposition~\ref{prop:BF sufficient condition} does not hold. The following example sheds some light upon this observation.

\begin{example} \label{ex:an FF PM having 0 as a limit point}
	Since $\rr$ is an infinite-dimensional vector space over $\qq$, we can take a sequence $(r_n)_{n \in \nn}$ of real numbers whose underlying set is linearly independent over $\qq$. After dividing each $r_n$ by a large enough positive integer $d_n$, one can further assume that $(r_n)_{n \in \nn}$ decreases to zero. Therefore $M = \langle r_n : n \in \nn \rangle$ is a UFM with $\mathcal{A}(M) = \{r_n : n \in \nn\}$. In particular, $M$ is a BFM with $\inf M^\bullet = 0$.
\end{example}

Let us now identify a class of positive monoids that are BFMs but are neither FFMs nor HFMs.

\begin{example} \label{ex:BFM that is neither FFM nor HFM}
	Consider the positive monoid $M := \{0\} \cup \rr_{\ge 1}$. It follows from Proposition~\ref{prop:BF sufficient condition} that~$M$ is a BFM. Note that $\mathcal{A}(M) = [1,2)$. Let us show that $M$ is not an FFM. To do this, note that for each $b \in (2,3]$ the formal sum $(1 + 1/n) + (b - 1 - 1/n)$ is a factorization of  length $2$ in $\mathsf{Z}(b)$ provided that $n \ge \big\lceil \frac{1}{b-2} \big\rceil$. This implies that $|\mathsf{Z}(b)| = \infty$ for all $b \in M_{>2}$. To see that $M$ is not an HFM, it suffices to observe that $3 = 3 \cdot 1 = 2 \cdot \frac{3}{2}$, which implies that $\{2,3\} \subseteq \mathsf{L}(3)$.
\end{example}

In light of Example~\ref{ex:BFM that is neither FFM nor HFM}, we make the following observation.

\begin{remark}
	The converse of the implications \textbf{HFM} $\Rightarrow$ \textbf{BFM} and \textbf{FFM} $\Rightarrow$ \textbf{BFM} do not hold in the class of positive monoids.
\end{remark}
\smallskip

We can generalize the monoid in Example~\ref{ex:BFM that is neither FFM nor HFM} and create two classes of positive monoids that are BFMs whose sets of atoms can be nicely described. These classes of monoids are quite suitable to provide counterexamples, as we just did in Example~\ref{ex:BFM that is neither FFM nor HFM}.

\begin{prop} \cite[Example~4.7]{AG20}, \cite[Proposition~3.14]{BG20} \label{thm:BF sufficient condition}
	Let $r,s \in \rr_{> 0}$ with $r>1$. Then the following statements hold.
	\begin{enumerate}
		\item $M_s = \{0\} \cup \rr_{\ge s}$ is a BFM with $\mathcal{A}(M_s) = [s, 2s)$.
		\smallskip
		
		\item $S_r = \nn_0 \cup \rr_{\ge r}$ is a BFM with $\mathcal{A}(S_r) = \big( \{1\} \cup [r,r+1) \big) \setminus \{ \lceil r \rceil \}$.
	\end{enumerate}
\end{prop}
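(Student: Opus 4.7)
My plan is to treat both parts by the same two-step strategy: first verify that each set is closed under addition (so that it really is a positive monoid), then identify its set of atoms by elementary case analysis, and finally conclude the BFM property as an immediate application of Proposition~\ref{prop:BF sufficient condition}. Indeed, for both $M_s$ and $S_r$ one has a positive infimum of nonzero elements ($s$ and $1$, respectively), so Proposition~\ref{prop:BF sufficient condition} instantly yields atomicity and the bounded factorization property. Thus the real content is the description of the atoms.

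For part (1), closure is immediate: if $x,y \in M_s$ are nonzero then $x+y \ge 2s > s$, so $x+y \in M_s$. For the atoms, I would observe that any $a \in [s, 2s)$ cannot be written as a sum of two nonzero elements of $M_s$, since each summand would be at least $s$ and hence the sum at least $2s > a$; conversely any $a \ge 2s$ admits the decomposition $a = s + (a-s)$ with both summands in $M_s^\bullet$, so it is not an atom. This identifies $\mathcal{A}(M_s) = [s,2s)$.

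For part (2), closure again reduces to a short case check: sums of two positive integers are positive integers, while any sum involving an element of $\rr_{\ge r}$ is at least $r$. The identification of atoms requires a bit more care. I would first note that $1$ is an atom (since the only elements of $S_r^\bullet$ strictly smaller than $r$ are the positive integers, and $r>1$ forces both summands of $1$ to be at least $1$). For $a \in [r, r+1)$, if $a = b_1 + b_2$ with $b_1,b_2 \in S_r^\bullet$ and $b_1 \ge r$, then $b_2 = a - b_1 < 1$, yet every nonzero element of $S_r$ is at least $1$; so any decomposition must have both summands integers, which forces $a \in \nn$, i.e. $a = \lceil r \rceil$. Conversely, $\lceil r \rceil = 1 + (\lceil r \rceil - 1)$ is a valid proper decomposition (since $\lceil r \rceil \ge 2$), and integers $n \ge 2$ are never atoms. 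Finally, any $a \ge r+1$ decomposes as $1 + (a-1)$ with $a - 1 \ge r$. Combining these cases gives $\mathcal{A}(S_r) = \bigl(\{1\} \cup [r, r+1)\bigr) \setminus \{\lceil r \rceil\}$.

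The only step requiring any genuine care is the atomicity analysis in part~(2), because of the need to separate the ``integer'' case from the ``noninteger'' case inside the window $[r, r+1)$ and to keep track of the special role of $\lceil r \rceil$. Everything else is bookkeeping, and the BFM conclusion in each part is handed to us directly by Proposition~\ref{prop:BF sufficient condition} via the trivial lower bounds $\inf M_s^\bullet = s > 0$ and $\inf S_r^\bullet = 1 > 0$.
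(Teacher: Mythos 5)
Your proof is correct and takes essentially the same route as the paper: both parts deduce the BFM property directly from Proposition~\ref{prop:BF sufficient condition} via the positive infima $s$ and $1$, and both identify the atoms by the same elementary windowing argument (sums of two nonzero elements of $M_s$ lie in $\rr_{\ge 2s}$, and $\rr_{\ge r+1} \subseteq 1 + S_r^\bullet$). Your case analysis in part (2) is just a slightly more explicit version of the paper's observation that $\mathcal{A}(S_r) \subseteq \ldb 1, \lceil r \rceil \rdb \cup [r, r+1)$ with $1$ an atom and the integers in $\ldb 2, \lceil r \rceil \rdb$ not.
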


\begin{proof}
	(1) As $\inf M_s^\bullet = s > 0$, it follows from Proposition~\ref{prop:BF sufficient condition} that $M_s$ is a BFM. In addition, since $2s$ is a lower bound for the set $[s,2s) + [s,2s)$, it follows that $[s,2s) \subseteq \mathcal{A}(M_s)$. Finally, it is clear that $[s,2s)$ generates $M_s$, which implies that $\mathcal{A}(M_s) = [s,2s)$.
	\smallskip
	
	(2) Once again, it follows from Proposition~\ref{prop:BF sufficient condition} that $S_r$ is a BFM. In addition, it is clear that $\rr_{\ge r+1} \subseteq 1 + S_r^\bullet$. As a consequence, $\mathcal{A}(S_r) \subseteq S^\bullet_r \cap \rr_{< r+1} =  \ldb 1, \lceil r \rceil \rdb \cup [r, r+1)$. Since $1 \in \mathcal{A}(S_r)$ and $m \notin \mathcal{A}(S_r)$ for any $m$ in the discrete interval $\ldb 2, \lceil r \rceil \rdb$, we can conclude that $\mathcal{A}(S_r) = \big( \{1\} \cup [r,r+1) \big) \setminus \{ \lceil r \rceil \}$.
\end{proof}

\bigskip

%-------------------------------------------------------------------------------
%-------------------------------------------------------------------------------
%-------------------------------------------------------------------------------

\section{The Finite Factorization Property}

We turn our discussion to the finite factorization property on the class of positive monoids. A positive monoid $M$ is called \emph{increasing} provided that it can be generated by an increasing sequence of positive real numbers. We show that increasing positive monoids are FFMs.

\begin{theorem} \cite[Proposition~3.3]{GG18} \label{thm:increasing PM of Archimedean fields are FF}
	Every increasing positive monoid is an FFM. In addition, if $(r_n)_{n \in \nn}$ is an increasing sequence of positive real numbers generating a positive monoid $M$, then $\mathcal{A}(M) = \{r_n : r_n \notin \langle r_1, \dots, r_{n-1} \rangle\}$.
\end{theorem}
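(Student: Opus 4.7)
The plan is to first establish the formula for $\mathcal{A}(M)$, and then leverage it together with Proposition~\ref{prop:BF sufficient condition} to control factorizations of a fixed element. Since every element of $M$ is a finite $\nn_0$-linear combination of the generators $r_n$, the set of atoms is automatically contained in $\{r_n : n \in \nn\}$. Suppose first that $r_n \in \langle r_1, \ldots, r_{n-1}\rangle$ and write $r_n = \sum_{i<n} c_i r_i$; because $r_n$ strictly exceeds each $r_i$ with $i<n$, the total length $\sum_{i<n} c_i$ must be at least $2$, yielding a nontrivial decomposition of $r_n$, so $r_n \notin \mathcal{A}(M)$. Conversely, suppose $r_n \notin \langle r_1, \ldots, r_{n-1}\rangle$. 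The key observation is that every $y \in M^\bullet$ with $y < r_n$ must lie in $\langle r_1, \ldots, r_{n-1}\rangle$, for otherwise its expansion would contain some $r_m$ with $m \ge n$, forcing $y \ge r_m \ge r_n$. Consequently, any decomposition $r_n = a + b$ with $a, b \in M^\bullet$ would entail $a, b < r_n$, both lying in $\langle r_1, \ldots, r_{n-1}\rangle$, whence $r_n \in \langle r_1, \ldots, r_{n-1}\rangle$, a contradiction.

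For the finite factorization claim, observe that $\inf M^\bullet = r_1 > 0$, so Proposition~\ref{prop:BF sufficient condition} ensures $M$ is a BFM; in particular, every factorization of a fixed $x \in M$ has length at most $\lfloor x/r_1 \rfloor$. Suppose for contradiction that $|\mathsf{Z}(x)| = \infty$. By pigeonhole on lengths, some fixed $k$ admits infinitely many distinct factorizations, which I will represent as sorted tuples $(r_{m_1^{(j)}}, \ldots, r_{m_k^{(j)}})$ with $m_1^{(j)} \le \cdots \le m_k^{(j)}$ and $j \in \nn$. Applying successive pigeonhole to each coordinate, I extract an infinite subsequence along which every $m_i^{(j)}$ is either constant in $j$ or tends to $\infty$. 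Since the factorizations are pairwise distinct, some coordinate must tend to infinity; letting $i_0$ be the smallest such, the sorting forces $m_i^{(j)} \to \infty$ for every $i \ge i_0$.

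The main obstacle is the case when the sequence $(r_n)$ is bounded, since then there remain infinitely many candidate atoms available below $x$. Here I will exploit the limit: $(r_n)$ converges to some $L \in \rr_{>0}$ with $r_n < L$ strictly for every $n$. Along the chosen subsequence, the identity $x = \sum_{i<i_0} r_{m_i} + \sum_{i \ge i_0} r_{m_i^{(j)}}$ shows that $\sum_{i \ge i_0} r_{m_i^{(j)}}$ equals the $j$-independent constant $x - \sum_{i<i_0} r_{m_i}$, which is therefore strictly less than $(k - i_0 + 1) L$; yet as $j \to \infty$ this same sum converges to $(k - i_0 + 1)L$, an impossibility. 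In the remaining case where $(r_n)$ is unbounded, the sum $\sum_{i \ge i_0} r_{m_i^{(j)}}$ already diverges as $j \to \infty$, contradicting that it equals the fixed value $x$. In either case we conclude $|\mathsf{Z}(x)| < \infty$, so $M$ is an FFM.
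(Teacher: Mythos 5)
Your proof is correct, but your argument for the finite factorization property takes a genuinely different route from the paper's. You work locally with a single element $x$: Proposition~\ref{prop:BF sufficient condition} bounds the lengths of its factorizations, a pigeonhole reduces to infinitely many factorizations of one fixed length $k$, and a subsequence extraction makes each coordinate either constant or divergent; the contradiction then comes from a convergence argument split according to whether the generating sequence is bounded (the tail sum would be constantly less than $(k-i_0+1)L$ yet converge to it) or unbounded (the tail sum diverges). The paper instead runs a minimal-counterexample argument on the whole monoid: it sets $s = \inf X$ where $X$ is the set of elements with infinitely many factorizations, picks $r \in X$ within $\epsilon < \min M^\bullet$ of $s$, uses minimality to conclude that each atom occurs in only finitely many factorizations of $r$ (deleting an atom would drop $r$ below $s$ while staying in $X$), and then compares two equal-length factorizations, one using only atoms exceeding every atom of the other, to get $a_1 + \cdots + a_\ell \le \ell a_\ell < a_1' + \cdots + a_\ell'$. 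Your route is more elementary and self-contained (no infimum over $X$, only monotone convergence and pigeonhole), at the price of the bounded/unbounded dichotomy; the paper's route avoids that dichotomy but relies on the somewhat delicate step of finding a factorization in $Z_\ell$ avoiding all atoms $\le a_\ell$. One point you should make explicit: both your claim that $\sum_{i<n} c_i \ge 2$ and your claim that $r_n < L$ for every $n$ use that the sequence is \emph{strictly} increasing; if ``increasing'' is read as non-decreasing, first replace $(r_n)$ by the strictly increasing enumeration of its distinct values, noting that if there are only finitely many such values then $M$ is finitely generated and hence an FFM. Your description of $\mathcal{A}(M)$ is essentially the paper's argument, with the key observation---that every element of $M^\bullet$ smaller than $r_n$ lies in $\langle r_1, \dots, r_{n-1} \rangle$---spelled out rather than left implicit.
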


\begin{proof}
	It is clear that $M$ is atomic; indeed, it follows from Proposition~\ref{prop:BF sufficient condition} that $M$ is a BFM. Let us suppose for the sake of contradiction that $M$ fails to be an FFM. Because $M$ is not an FFM, the set
	\[
		X := \{r \in M : \, |\mathsf{Z}(r)| = \infty\}
	\]
	is not empty. Set $s = \inf X$ and note that $s$ is positive. Since $M$ is increasing, it follows that  $m := \inf M^\bullet \in M$. Take $\epsilon \in (0,m)$ and then $r \in X$ with $s \le r < s + \epsilon$. Observe that every $a \in \mathcal{A}(M)$ appears in only finitely many factorizations of $r$. Because  $|\mathsf{L}(r)| < \infty$, we can choose $\ell \in \mathsf{L}(r)$ so that $Z_\ell := \{z \in \mathsf{Z}(r) : \, |z| = \ell\}$ has infinite size. Let $z = a_1 \cdots a_\ell \in Z_\ell$ for some atoms $a_1, \dots, a_\ell$ of $M$ with $a_1 \le \cdots \le a_\ell$. As each of the atoms appears in only finitely many factorizations of the infinite set $Z_\ell$, we can take $z' = a'_1 \cdots a'_\ell \in Z_\ell$ for some atoms $a'_1, \dots, a'_\ell$ of $M$ satisfying $a_\ell < \min\{a'_1, \dots, a'_\ell\}$. Accordingly, we obtain
	\[
		a_1 + \cdots + a_\ell \le \ell a_\ell < a'_1 + \cdots + a'_\ell,
	\]
	which contradicts that both $a_1 \cdots a_\ell$ and $a'_1 \cdots a'_\ell$ are factorizations of the same element, namely, $r$. Hence $M$ is an FFM. 
	\smallskip
	
	In order to argue the second statement, let $A$ denote the set $\{r_n : r_n \notin \langle r_1, \dots, r_{n-1} \rangle\}$. It follows immediately that $A = \mathcal{A}(M)$ if $M$ is finitely generated, in which case, $M$ is an FFM by \cite[Corollary~3.7]{AG20}. We assume, therefore, that $|A| = \infty$. Let $(a_n)_{n \in \nn}$ be a strictly increasing sequence with underlying set $A$. Because $a_1$ is the minimum of $M^\bullet$, it must be an atom. In addition, as $(a_n)_{n \in \nn}$ is strictly increasing, for each $n \ge 2$ the fact that $a_n \notin \langle a_1,\dots, a_{n-1} \rangle$ guarantees that $a_n \in \mathcal{A}(M)$. As a result, $\mathcal{A}(M) = A$, which concludes the proof.
\end{proof}

There are positive monoids that are FFM but are not increasing, that is, the converse of Theorem~\ref{thm:increasing PM of Archimedean fields are FF} does not hold in general.

\begin{example}
	Consider the monoid $M = \langle r_n : n \in \nn \rangle$ constructed in Example~\ref{ex:an FF PM having 0 as a limit point}, where $(r_n)_{n \in \nn}$ is a sequence of real numbers that strictly decreases to zero and whose terms are linearly independent over $\qq$. Since $M$ is a UFM, it is clearly an FFM. However, the fact that $0$ is a limit point of $M^\bullet$ guarantees that $M$ cannot be generated by an increasing sequence of real numbers. Hence $M$ is not an increasing positive monoid.
\end{example}

\medskip
For every $n \in \nn$, we can take elements $r_1, \dots, r_n \in \rr_{> 0}$ that are linearly independent over $\qq$. Consider the positive monoid $M_n := \langle r_1, \dots, r_n \rangle$. It is clear that $M_n$ is a UFM of rank $n$. In the same way, we can create (and have created in previous examples) positive monoids of infinite rank. Since every UFM is an FFM, we have finite factorization positive monoids of any rank. It turns out that just inside the class of positive monoids $\{\nn_0[\alpha] : \alpha \in \rr_{>0}\}$ discussed in Section~\ref{sec:atomicity}, there are FFMs of any rank that are not UFMs. 

\begin{prop} \cite[Theorem~5.4]{CG20} \label{prop:UFM characterization}
	For $\alpha \in \rr_{>0}$, the following statements hold.
	\begin{enumerate}
		
		\item If $\alpha$ is transcendental, then $\nn_0[\alpha]$ is UFM of infinite rank.
		\smallskip
		
		\item If $\alpha > 1$, then $\nn_0[\alpha]$ is an FFM.
		\smallskip
		
		\item If $\alpha$ is algebraic with minimal polynomial $m(x)$, then $\nn_0[\alpha]$ is a UFM if and only if $\deg m(x) = |\mathcal{A}(\nn_0[\alpha])|$.
		
	\end{enumerate}
\end{prop}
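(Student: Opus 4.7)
My plan breaks into handling the three statements in turn, with the main content concentrated in~(3). For~(1), I would invoke Proposition~\ref{prop:atoms of cyclic algebraic semirings}(1) to identify $\mathcal{A}(\nn_0[\alpha]) = \{\alpha^n : n \in \nn_0\}$. Uniqueness of factorization is then a direct transcendence argument: if $\sum_i c_i \alpha^i = \sum_i c'_i \alpha^i$ with $c_i, c'_i \in \nn_0$, then $\alpha$ is a root of $\sum_i (c_i - c'_i) x^i \in \zz[x]$, which must be identically zero, so $c_i = c'_i$ for every $i$. The infinite rank follows from the $\qq$-linear independence of $\{1, \alpha, \alpha^2, \dots\}$, all of which lie in $\gp(\nn_0[\alpha])$. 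For~(2), the sequence $(\alpha^n)_{n \in \nn_0}$ is strictly increasing when $\alpha > 1$ and generates $\nn_0[\alpha]$ by definition, so $\nn_0[\alpha]$ is an increasing positive monoid and Theorem~\ref{thm:increasing PM of Archimedean fields are FF} immediately yields the FFM property.

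The key technical input for~(3) is a rank computation. The Grothendieck group $\gp(\nn_0[\alpha])$ is the additive subgroup of $\rr$ generated by $\{\alpha^n : n \in \nn_0\}$ and therefore coincides with $\zz[\alpha]$. Tensoring with $\qq$ gives $\qq \otimes_\zz \gp(\nn_0[\alpha]) = \qq[\alpha] = \qq(\alpha)$, which has $\qq$-dimension $\deg m(x)$. Hence $\rank(\nn_0[\alpha]) = \deg m(x)$ whenever $\alpha$ is algebraic. The forward direction of~(3) is then formal: a UFM is isomorphic to the free commutative monoid on its atoms, whose Grothendieck group is the free abelian group of rank equal to the number of atoms; comparing ranks forces $|\mathcal{A}(\nn_0[\alpha])| = \deg m(x)$.

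For the converse, I would assume $|\mathcal{A}(\nn_0[\alpha])| = \deg m(x)$, which is finite and nonzero. Since the atom set is nonempty, Theorem~\ref{thm:atomic characterization} gives atomicity, and Proposition~\ref{prop:atoms of cyclic algebraic semirings}(2) then forces $\sigma$ to be finite with $\mathcal{A}(\nn_0[\alpha]) = \{\alpha^n : n \in \ldb 0, \sigma - 1 \rdb\}$ and $\sigma = \deg m(x)$. Any two factorizations of the same element, namely $\sum_{i=0}^{\sigma - 1} c_i \alpha^i = \sum_{i=0}^{\sigma - 1} c'_i \alpha^i$, yield a polynomial in $\zz[x]$ of degree strictly less than $\deg m(x)$ vanishing at $\alpha$, which contradicts the minimality of $m(x)$ unless the polynomial vanishes identically; this gives the sought uniqueness. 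I expect the main obstacle to be pinning down the identification $\gp(\nn_0[\alpha]) = \zz[\alpha]$ together with the rank calculation, since once that is in place both directions of~(3) follow by formal comparisons and everything reduces to the minimality of $m(x)$ in the degree argument.
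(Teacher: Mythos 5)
Your proposal is correct, and parts (1), (2), and the reverse direction of (3) follow the paper's own route essentially verbatim (transcendence gives $\qq$-linear independence of the powers of $\alpha$; $(\alpha^n)_{n \in \nn_0}$ is increasing for $\alpha > 1$ so Theorem~\ref{thm:increasing PM of Archimedean fields are FF} applies; and two factorizations of the same element produce a polynomial of degree less than $\deg m(x)$ vanishing at $\alpha$). Where you genuinely diverge is the forward direction of (3). The paper argues directly: it sets $\sigma = \min\{n : \alpha^n \in \langle \alpha^j : j < n\rangle\}$, notes $\deg m(x) \le \sigma$, and rules out strict inequality by writing $d\,m(x) = p(x) - q(x)$ with $p(x), q(x) \in \nn_0[x]$, so that $p(\alpha) = q(\alpha)$ exhibits two factorizations of one element which the UFM hypothesis forces to coincide, giving $m(x) = 0$; it also handles $\alpha \in \qq$ as a separate case by citing an external isomorphism $\nn_0[\alpha] \cong \nn_0$. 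You instead compare ranks: $\gp(\nn_0[\alpha]) = \zz[\alpha]$ gives $\rank \nn_0[\alpha] = \deg m(x)$, while a reduced UFM is free on its atoms, so its rank equals $|\mathcal{A}(\nn_0[\alpha])|$. Both computations are sound (the paper itself invokes $\rank \nn_0[\alpha] = \deg m(x)$, via \cite[Proposition~3.2]{CG19}, in the proposition that follows), and your argument has the advantage of treating the rational and irrational cases uniformly and of automatically forcing $|\mathcal{A}(\nn_0[\alpha])|$ to be finite. What the paper's version buys in exchange is self-containedness at that point in the text --- it needs no Grothendieck-group computation --- and an explicit witness: when the atom count exceeds $\deg m(x)$, the minimal polynomial itself hands you two distinct factorizations of a single element, which is more informative than a cardinality mismatch.
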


\begin{proof}
	(1) Because $\alpha$ is transcendental, there is no nonzero polynomial in $\qq[x]$ having $\alpha$ as a root and, therefore, the set $\{\alpha^n : n \in \nn_0\}$ is linearly independent over $\qq$. Hence $\nn_0[\alpha]$ is a UFM.
	\smallskip
	
	(2) Since $\alpha > 1$, we see that $\alpha^n < \alpha^{n+1}$ for every $n \in \nn_0$. As a result, $(\alpha^n)_{n \in \nn_0}$ is an increasing sequence generating $\nn_0[\alpha]$. Hence $\nn_0[\alpha]$ is an increasing positive monoid, and it follows from Theorem~\ref{thm:increasing PM of Archimedean fields are FF} that it is an FFM.
	\smallskip

	(3) For the direct implication, suppose that $\nn_0[\alpha]$ is a UFM (and so an HFM). If $\alpha \in \qq$, then $\deg m(x) = 1$, while it follows from \cite[Proposition~4.2]{fG20} that $\nn_0[\alpha]$ is isomorphic to the additive monoid $\nn_0$. So in this case, the equalities $\deg m(x) = 1 = |\mathcal{A}(\nn_0[\alpha])|$ hold. We assume, therefore, that $\alpha \notin \qq$, that is, $\deg m(x) > 1$. Now set
	\[
		\sigma = \min \{n \in \nn : \alpha^n \in \langle \alpha^j : j \in \ldb 0, n-1 \rdb \rangle \}.
	\]
	As $\nn_0[\alpha]$ is atomic, $\mathcal{A}(\nn_0[\alpha]) = \{\alpha^j : j \in \ldb 0, \sigma-1 \rdb\}$ by Theorem~\ref{thm:atomic characterization}. Because $m(x)$ divides any polynomial in $\qq[x]$ having $\alpha$ as a root, we obtain that $\deg m(x) \le \sigma$. Suppose for the sake of contradiction that $\deg m(x) < \sigma$. Now take $d \in \nn$ with $dm(x) \in \zz[x]$ and write $dm(x) = p(x) - q(x)$ for polynomials $p(x)$ and $q(x)$ in $\nn_0[x]$. Since $m(\alpha) = 0$, both $p(\alpha)$ and $q(\alpha)$ induce factorizations in $\nn_0[\alpha]$ of the same element. Since $\nn_0[\alpha]$ is a UFM, we see that $m(x) = 1/d(p(x) - q(x)) = 0$, which is a contradiction. Thus, $\deg m(x) = \sigma = |\mathcal{A}(\nn_0[\alpha])|$, as desired.

	For the reverse implication, suppose that $\deg m(x) = |\mathcal{A}(\nn_0[\alpha])|$. As the set $\mathcal{A}(\nn_0[\alpha])$ is not empty, $\nn_0[\alpha]$ is atomic by virtue of Theorem~\ref{thm:atomic characterization}. In addition,
	\[
		\mathcal{A}(\nn_0[\alpha]) = \{\alpha^j : j \in \ldb 0,d-1 \rdb\},
	\]
	where $d$ is the degree of $m_\alpha(x)$. Then $\alpha^d = \sum_{i=0}^{d-1} c_i \alpha^i$ for some $c_0, \dots, c_{d-1} \in \nn_0$, which implies that $m(x) = x^d - \sum_{i=0}^{d-1} c_i x^i$. Let $\sigma$ be as in the previous paragraph.
	Now for any two factorizations $z_1, z_2 \in \mathsf{Z}(\nn_0[\alpha])$ of the same element in $\nn_0[\alpha]$, we have that $\max \{ \deg z_1(x), \deg z_2(x) \} < d$ and $z_1(\alpha) = z_2(\alpha)$. This implies that $m(x)$ divides the polynomial $z_1(x) - z_2(x)$, which has degree strictly less than $m(x)$. As a result, $z_1(x) = z_2(x)$, which implies that $z_1 = z_2$. As a consequence, $\nn_0[\alpha]$ is a UFM.
\end{proof}

Let us show now that, for any rank $n$, there is a positive monoid of rank $n$ that is an FFM but not a UFM. As far as we know, the following result does not appear in the current literature.

\begin{prop}
	For every $n \in \nn$, there exists an algebraic element $\alpha \in \rr_{> 0}$ such that $\nn_0[\alpha]$ is a rank-$n$ FFM that is not a UFM.
\end{prop}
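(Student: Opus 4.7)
My plan is to produce, for each $n \in \nn$, an explicit algebraic number $\alpha > 1$ of degree exactly $n$ whose cyclic semiring $\nn_0[\alpha]$ has infinitely many atoms. I will take $\alpha = \alpha_n := (3/2)^{1/n}$, the positive real $n$-th root of $3/2$.

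First I would verify that $\alpha$ is algebraic of degree $n$ over $\qq$: the polynomial $2x^n - 3 \in \zz[x]$ is primitive, and Eisenstein's criterion at $p = 3$ shows it is irreducible, so the minimal polynomial of $\alpha$ is $m(x) = x^n - 3/2$ and $\{1, \alpha, \dots, \alpha^{n-1}\}$ is a $\qq$-basis of $\qq[\alpha]$. From this, $\rank(\nn_0[\alpha]) = n$, because $\qq \otimes_\zz \gp(\nn_0[\alpha])$ coincides with the $\qq$-span of the powers of $\alpha$, namely $\qq[\alpha] = \qq(\alpha)$. Since $\alpha > 1$, Proposition~\ref{prop:UFM characterization}(2) immediately yields that $\nn_0[\alpha]$ is an FFM (and in particular is atomic).

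To conclude that $\nn_0[\alpha]$ is not a UFM, I will apply Proposition~\ref{prop:UFM characterization}(3), for which it suffices to prove $|\mathcal{A}(\nn_0[\alpha])| \ne n$. Since $\nn_0[\alpha]$ is atomic, Proposition~\ref{prop:atoms of cyclic algebraic semirings}(2) reduces this to showing that
\[
    \sigma := \min\{k \in \nn \cup \{\infty\} : \alpha^k \in \langle \alpha^j : j \in \ldb 0, k-1 \rdb \rangle\} = \infty,
\]
for then the atom set $\{\alpha^k : k \in \nn_0\}$ is infinite. To argue $\sigma = \infty$, I will show that any hypothetical relation $\alpha^k = \sum_{j=0}^{k-1} c_j \alpha^j$ with each $c_j \in \nn_0$ leads to a contradiction. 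Writing $k = Nn + i$ with $0 \le i < n$, the identity $\alpha^t = (3/2)^{\lfloor t/n \rfloor} \alpha^{t \bmod n}$ lets me expand both sides in the $\qq$-basis $\{1, \alpha, \dots, \alpha^{n-1}\}$. Equating coordinates at every index $i' \ne i$ forces $c_j = 0$ whenever $j \not\equiv i \pmod n$, while the coordinate at $i$ gives the scalar equation $(3/2)^N = \sum_{l=0}^{N-1} c_{i+ln}\,(3/2)^l$. Clearing denominators turns this into $3^N = \sum_{l=0}^{N-1} c_{i+ln}\,2^{N-l}\,3^l$, which has no solution in nonneg integers because $3^N$ is odd while the right side is even (vacuously so when $N = 0$).

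The main obstacle is the final parity argument above. The other ingredients (Eisenstein, the rank identification, and the invocations of Proposition~\ref{prop:UFM characterization} and Proposition~\ref{prop:atoms of cyclic algebraic semirings}) are either routine or already available from earlier in the paper. The delicate step is to unpack the reduction $\alpha^n = 3/2$, to use the $\qq$-linear independence of $1, \alpha, \dots, \alpha^{n-1}$ to constrain the support of any nonneg-integer relation, and then to exploit the denominator $2$ appearing in $3/2$ to produce the parity contradiction uniformly in $k$.
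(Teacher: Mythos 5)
Your proof is correct, and while it follows the same overall skeleton as the paper's (Eisenstein for irreducibility, rank equals the degree of the minimal polynomial, $\alpha>1$ gives an FFM via the increasing-monoid theorem, and $|\mathcal{A}(\nn_0[\alpha])| \neq \deg m(x)$ gives non-uniqueness via Proposition~\ref{prop:UFM characterization}(3)), it differs in the two substantive choices. The paper takes $\alpha$ to be the root in $(1,4)$ of $m(x) = x^n - 4x + 2$ and rules out $|\mathcal{A}(\nn_0[\alpha])| = n$ with a one-line sign observation: if equality held, then $\alpha^n = \sum_{i=0}^{n-1} c_i\alpha^i$ with $c_i \in \nn_0$ would force $x^n - \sum c_i x^i = m(x)$ and hence $c_0 = -m(0) = -2 < 0$, a contradiction. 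You instead take $\alpha = (3/2)^{1/n}$ and prove the stronger fact that $\sigma = \infty$, i.e., that $\nn_0[\alpha]$ has infinitely many atoms, by expanding a hypothetical relation $\alpha^k = \sum_{j<k} c_j\alpha^j$ in the basis $\{1,\alpha,\dots,\alpha^{n-1}\}$ (using $\alpha^n = 3/2$), using positivity of the $c_j$ to kill all residues other than $k \bmod n$, and then deriving the parity contradiction $3^N = \sum_{l=0}^{N-1} c_{i+ln}2^{N-l}3^l$ (odd equals even, or $1=0$ when $N=0$). Your computation is heavier but yields more: it exhibits an explicit degree-$n$ witness for which the atom set $\{\alpha^k : k \in \nn_0\}$ is infinite, whereas the paper's argument only certifies $|\mathcal{A}(\nn_0[\alpha])| \neq n$; the paper's choice of polynomial buys a much shorter verification of the key inequality. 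All the auxiliary steps you cite (the rank identification via $\qq\otimes_\zz\gp(\nn_0[\alpha]) \cong \qq(\alpha)$, and the appeals to Propositions~\ref{prop:atoms of cyclic algebraic semirings} and~\ref{prop:UFM characterization}) are sound, and your construction has the minor bonus of treating $n=1$ uniformly with $n \ge 2$, whereas the paper handles $n=1$ as a separate case.
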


\begin{proof}
	For $n=1$, we can take $M = \langle q^n : n \in \nn_0 \rangle$, where $q \in \qq_{> 1}$. It is clear that $M$ has rank $1$. Since $M$ is generated by the increasing sequence $(q^n)_{n \in \nn_0}$, Theorem~\ref{thm:increasing PM of Archimedean fields are FF} ensures that $M$ is an FFM. Also, it follows from \cite[Proposition~4.2]{fG20} that $M$ is not a UFM. 
	
	For $n \ge 2$, consider the polynomial $m(x) = x^n - 4x + 2 \in \zz[x]$. Since $m(1) = -1$ and $m(4) > 0$, the polynomial $m(x)$ has a root $\alpha$ in the interval $(1,4)$. It follows from Eisenstein's Criterion at the prime ideal $2\zz$ that $m(x)$ is irreducible. As a result, $m(x)$ is the minimal polynomial of $\alpha$. Consider now the monoid $M = \nn_0[\alpha]$. It follows from~\cite[Proposition~3.2]{CG19} that the rank of $M$ equals the degree of $m(x)$, that is, $\rank \, M  = n$. Because $\alpha > 1$, the monoid $M$ is an FFM by virtue of Theorem~\ref{thm:increasing PM of Archimedean fields are FF}. Finally, let us show that $M$ is not a UFM. Suppose, by way of contradiction, that $\deg m(x) = |\mathcal{A}(M)|$. In this case, it follows from Proposition~\ref{prop:atoms of cyclic algebraic semirings} that
	\[
		\mathcal{A}(M) = \{\alpha^j : j \in \ldb 0,n-1 \rdb\}.
	\]
	Then $\alpha^n \in \langle \alpha^j : j \in \ldb 0, n-1 \rdb \rangle$, and so we can take $c_0, \dots, c_{n-1} \in \nn_0$ such that $\alpha^n = \sum_{i=0}^{n-1} c_i \alpha^i$. Now the fact that $\alpha$ is a root of the polynomial $f(x) = x^n - \sum_{i=0}^{n-1} c_i x^i$, which is monic of degree $\deg m(x)$, implies that $f(x) = m(x)$. However, in this case one finds that $c_0 = -f(0) = -m(0) = -2$, a contradiction. As a consequence, $\deg m(x) \neq |\mathcal{A}(M)|$, and so Proposition~\ref{prop:UFM characterization} guarantees that $M$ is not a UFM, which concludes our proof.
\end{proof}

Let us record the following remark in connection to Diagram~\eqref{diag:AAZ's chain for monoids}.

\begin{remark}
	The converse of the implication \textbf{UFM} $\Rightarrow$ \textbf{FFM} does not hold in the class of positive monoids.
\end{remark}

For the sake of completeness, we conclude with an example of a positive monoid that is an HFM but not a UFM; this is \cite[Example~7.2]{BCG21}.

\begin{example} \label{ex:HFM not UFM}
	Take $n \in \nn$ and consider the positive monoid $M_n = \big\langle \pi, n, \frac{\pi + n}{2} \big\rangle$. One can easily show that $\mathcal{A}(M_n) = \big\{ \pi, n, \frac{\pi + n}{2} \big\}$. As $\pi + n$ and $2 \frac{\pi + n}{2}$ are distinct factorizations of $\pi + n$, we see that $M_n$ is not a UFM. Now take $c_1, c_2, c_3 \in \zz$ such that
	\[
		c_1 \pi + c_2 n +c_3 \frac{\pi + n}2 = 0.
	\]
	Since $\pi$ is irrational, $c_1 + c_3/2 = c_2 + c_3/2 = 0$ and, therefore, $c_1 + c_2 + c_3 = 0$. Thus, the positive monoid $M_n$ is an HFM.
\end{example}

We conclude with the following remark in connection to Diagram~\eqref{diag:AAZ's chain for monoids}.

\begin{remark}
	The converse of the implication \textbf{UFM} $\Rightarrow$ \textbf{HFM} does not hold in the class of positive monoids.
\end{remark}

\bigskip

%-------------------------------------------------------------------------------
%-------------------------------------------------------------------------------
%-------------------------------------------------------------------------------

\section*{Acknowledgments}

The authors would like to thank Felix Gotti for helpful conversations during the preparation of this paper.

\bigskip

%-------------------------------------------------------------------------------
%-------------------------------------------------------------------------------
%-------------------------------------------------------------------------------

\end{document}